\numberwithin{table}{section}
\numberwithin{figure}{section}
\newcommand{\fa}{\mathfrak{a}}
\newcommand{\fb}{\mathfrak{b}}
\newcommand{\fd}{\mathfrak{d}}
\newcommand{\sA}{\mathcal{A}}
\newcommand{\sE}{\mathcal{E}}
\newcommand{\sI}{\mathcal{I}}
\newcommand{\sK}{\mathcal{K}}
\newcommand{\sL}{\mathcal{L}}
\newcommand{\sM}{\mathcal{M}}
\newcommand{\sQ}{\mathcal{Q}}
\newcommand{\sS}{\mathcal{S}}
\newcommand{\CC}{\mathbb{C}}
\newcommand{\FF}{\mathbb{F}}
\newcommand{\GG}{\mathbb{G}}
\newcommand{\PP}{\mathbb{P}}
\newcommand{\QQ}{\mathbb{Q}}
\newcommand{\ZZ}{\mathbb{Z}}
\newcommand{\kalg}{k^\mathrm{alg}}
\newcommand{\ksep}{k^\mathrm{sep}}
\newcommand{\bp}{\mathbf{p}}
\renewcommand{\P}{\mathbf{P}}
\newcommand{\Isym}{\sI_\mathrm{sym}}
\newcommand{\Ssym}{\sS_\mathrm{sym}}
\DeclareMathOperator{\Aut}{Aut}
\DeclareMathOperator{\Div}{Div}
\let\div\relax
\DeclareMathOperator{\div}{div}
\DeclareMathOperator{\End}{End}
\DeclareMathOperator{\Jac}{Jac}
\DeclareMathOperator{\Sp}{Sp}
\DeclareMathOperator{\GL}{GL}
\DeclareMathOperator{\Prym}{Prym}
\DeclareMathOperator{\Pic}{Pic}
\DeclareMathOperator{\res}{res}
\renewcommand{\H}{\operatorname{H}}
\renewcommand{\:}{\colon}
\newcommand{\grpC}{\mathrm{C}}
\newcommand{\grpD}{\mathrm{D}}
\newcommand{\grpV}{\mathrm{V}}
\newtheorem{theorem}{Theorem}[section]
\newtheorem{proposition}[theorem]{Proposition}
\newtheorem{corollary}[theorem]{Corollary}
\newtheorem{lemma}[theorem]{Lemma}
\theoremstyle{definition}
\newtheorem{definition}[theorem]{Definition}
\newtheorem{remark}[theorem]{Remark}
\newtheorem{example}[theorem]{Example}
\title{On $(2,2)$-decomposable genus $4$ Jacobians}
\author{Nils Bruin}
\author{Avinash Kulkarni}
\thanks{The first author acknowledges the support of the Natural Sciences and Engineering Research Council of Canada (NSERC), funding reference number RGPIN-2018-04191.
The second author acknowledges support by
a Simons Collaboration Grant (550029).
}
\address{Department of Mathematics, Simon Fraser University, Burnaby, BC, Canada V5A 1S6}
\email{nbruin@sfu.ca}
\urladdr{http://www.cecm.sfu.ca/~nbruin}
\date{September 2, 2023}
\subjclass[2020]{Primary 14H40, 11G10 ; Secondary 14H45, 14H10}
\keywords{curves, Jacobians, decomposable abelian varieties, Prym varieties, isogenies, Kummer surfaces}
\begin{document}
\begin{abstract}
We consider the question of when a Jacobian of a curve of genus $2g$ admits a $(2,2)$-isogeny to two polarized dimension $g$ abelian varieties. We find that one of them must be a Jacobian itself and, if the associated curve is hyperelliptic, so is the other.

For $g=2$ this allows us to describe $(2,2)$-decomposable genus $4$ Jacobians in terms of Prym varieties.
We describe the locus of such genus $4$ curves in terms of the geometry of the Igusa quartic threefold.
We also explain how our characterization relates to Prym varieties of unramified double covers of plane quartic curves, and  we describe this Prym map in terms of $6$ and $7$ points in $\PP^3$.
  		
We also investigate which genus $4$ Jacobians admit a $2$-isogeny to the square of a genus $2$ Jacobian and give a full description of the hyperelliptic ones. While most of the families we find are of the expected dimension $1$, we also find a family of unexpectedly high dimension~$2$.
\end{abstract}
\maketitle

\section{Introduction}

The motivation for our study is to describe genus four curves $X$ such that $\Jac(X)$ is decomposable in a particular way. For a principally polarized abelian variety $A$ of dimension $2g$, we say it is $(n,n)$-decomposable if it admits a polarized isogeny $\phi\colon J_1\times J_2 \to A$, where $J_1,J_2$ are principally polarized abelian varieties of dimension $g$ and such that the multiplication-by-$n$ map on $J_1\times J_2$ factors through $\phi$.

One can make $(n,n)$-decomposable abelian varieties by \emph{gluing} $J_1$ and $J_2$ along their $n$-torsion: suppose we have two principally polarized abelian varieties $J_1,J_2$ of dimension $g$, with an antisymplectic isomorphism $\tau\colon J_1[n]\to J_2[n]$. The graph $\Delta$ of this isomorphism in $J_1\times J_2$, which is isomorphic to $J_1[n]$ and $J_2[n]$ as a group scheme, is then a maximal isotropic subgroup of $(J_1\times J_2)[n]$ with respect to the product pairing Weil pairing. Hence, $A=(J_1\times J_2)/\Delta$ admits a principal polarization, and $A$ is $(n,n)$-decomposable by construction. 

For $g=1$, the study of $(n,n)$-decomposable abelian surfaces has a long history, for $n=2,3,4$ going back to Jacobi, Legendre \cite{legendre-traite}, Goursat \cite{Goursat1885}, and Bolza \cite{Bolza1887}; mainly in the language of reducing hyperelliptic integrals to elliptic ones. For more modern treatment, see \cite{Kuhn1988} for $n=3$,   \cite{BruinDoerksen2011} for $n=4$, and \cite{MagaardShaskaVolklein2009} for $n=5$.

For $g>1$, the picture becomes significantly more complicated. Principally polarized abelian varieties of dimension $4$ are not generally Jacobians and, if they are, not generally Jacobians of hyperelliptic curves. For $n=2$ we obtain a description that identifies the nature of decomposable Jacobians and how hyperellipticity affects them. The proof uses the Torelli theorem; see Section~\ref{S:proof_Theorem_1}. Throughout this paper we consider base fields of characteristic not equal to $2$.

\begin{theorem}\label{T:galchar}
Let $X$ be a curve of genus $2g$ such that $\Jac(X)$ is $(2,2)$-decomposable into factors $J_1, J_2$. Then, possibly after interchanging $J_1$ and $J_2$, the curve $X$ admits a double cover $X\to C_1$ of a genus $g$ curve such that $J_1=\Jac(C_1)$ and $J_2=\Prym(X,C_1)$.

If $C_1$ is hyperelliptic then the hyperelliptic quotient is $\PP^1$ and the tower $X\to C_1\to\PP^1$ expresses $X$ as a quartic cover.

\begin{enumerate}
\item \label{T:galchar:generic}
  If $X\to\PP^1$ is not geometrically Galois with Galois closure $X!\to X$ then $J_2\simeq \Jac(C_2)$ for some genus $g$ hyperelliptic curve $C_2$ that fit Diagram~\ref{fig:D4}.

\item \label{T:galchar:hyperelliptic}
  If $X$ is hyperelliptic then $X\to\PP^1$ is Galois and admits an intermediate cover $X\to C_2$ as in Diagram~\ref{fig:V4}, where $C_2$ is a hyperelliptic curve of genus $g$ and $\Jac(C_2)\simeq J_2$

\item \label{T:galchar:V4cover}
  If $X$ is not hyperelliptic and yet $X\to\PP^1$ is geometrically Galois then, possibly over a quadratic base field extension, we have curves $C_2,C_3$ as in Diagram~\ref{fig:V4nh} such that $J_2\simeq \Jac(C_2)\times \Jac(C_3)$.
\end{enumerate}
\end{theorem}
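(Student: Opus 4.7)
The plan begins by producing a nontrivial involution on $J=\Jac(X)$. Given the $(2,2)$-isogeny $\phi\colon J_1\times J_2\to J$ with kernel the graph $\Delta$ of an antisymplectic isomorphism $\tau\colon J_1[2]\to J_2[2]$, I observe that the automorphism $\alpha\colon(j_1,j_2)\mapsto(j_1,-j_2)$ of $J_1\times J_2$ preserves $\Delta$, since on $2$-torsion $\tau(-x)=\tau(x)$. Hence $\alpha$ descends to an involution $\sigma$ of $(J,\Theta)$ acting as $+1$ on the image of $J_1$ and as $-1$ on the image of $J_2$, so in particular $\sigma\neq\pm 1$.

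Next I invoke the Torelli theorem: every element of $\Aut(J,\Theta)$ has the form $\pm\iota_X^{*}$ for some $\iota_X\in\Aut(X)$. Replacing $\sigma$ by $-\sigma$ if necessary (which is exactly the interchange of $J_1$ and $J_2$), I obtain an involution $\iota_X$ on $X$; set $C_1=X/\iota_X$. Dimension counting in the $\sigma$-fixed decomposition of $J$, together with Riemann--Hurwitz applied to $\pi\colon X\to C_1$, forces $g(C_1)=g$. To match polarizations I compare two pullbacks. That $\phi$ is a polarized $(2,2)$-isogeny gives $\phi^{*}\Theta_J=2(\Theta_{J_1}\boxplus\Theta_{J_2})$, which restricts to $\phi|_{J_1}^{*}\Theta_J=2\Theta_{J_1}$. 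On the other side, the standard formula for the double cover $\pi$ yields $\pi^{*}\Theta_J|_{\pi^{*}\Jac(C_1)}=2\Theta_{C_1}$. Since both $J_1\hookrightarrow J$ and $\Jac(C_1)\hookrightarrow J$ land in the identity component of $J^{\sigma}$ carrying the same half of $\Theta_J$, we conclude $J_1\cong\Jac(C_1)$ as PPAVs, and then $J_2\cong\Prym(X,C_1)$ by the classical complementarity of Prym and Jacobian inside $J$.

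For the hyperelliptic portion, assume $C_1$ admits a hyperelliptic quotient $h\colon C_1\to\PP^1$, so that $X\to\PP^1$ has degree $4$ with $C_1$ as an intermediate cover. The geometric Galois group of its Galois closure must lie in the stabilizer in $S_4$ of the block partition $\{\{1,2\},\{3,4\}\}$, which is the dihedral group $\grpD_4$. The three cases then correspond to the three possible geometric Galois groups. The group $\grpD_4$ itself yields case~\ref{T:galchar:generic}: the Galois closure $X!\to X$ and the other intermediate quotients realize Diagram~\ref{fig:D4} and identify a second hyperelliptic curve $C_2$ with $\Jac(C_2)\cong J_2$. The Klein four-group $\grpV_4$ with the hyperelliptic involution of $X$ as one generator yields case~\ref{T:galchar:hyperelliptic} and Diagram~\ref{fig:V4}, with $C_2$ the third quotient. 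The geometric groups $\ZZ/4$ or $\grpV_4$ without $X$ hyperelliptic yield case~\ref{T:galchar:V4cover}, where Galois descent may only place $C_2,C_3$ over a quadratic extension; the factorization $J_2\cong\Jac(C_2)\times\Jac(C_3)$ then follows from applying the complementary-abelian-subvariety argument of the previous paragraph inside the Galois tower, fitting Diagram~\ref{fig:V4nh}.

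The main obstacle I anticipate is the careful polarization matching in the second step: Torelli fixes $\sigma$ only up to $\pm 1$, so picking the lift $\iota_X$ for which $\pi^{*}\Jac(C_1)$ coincides with the image of $J_1$ both as an abelian subvariety and with the matching principal polarization is precisely where the hyperelliptic ambiguity in $\Aut(J,\Theta)$ enters and must be handled (this also explains why relabeling $J_1\leftrightarrow J_2$ is sometimes necessary). A secondary challenge is the Galois descent in case~\ref{T:galchar:V4cover}, where the geometric $\grpV_4$- or $\ZZ/4$-structure need not descend over $k$ directly, and one must track the quadratic twist in the resulting decomposition of $J_2$.
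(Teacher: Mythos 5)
Your first step---descending the involution $(j_1,j_2)\mapsto(j_1,-j_2)$ to $\Jac(X)$, applying Torelli, and matching $J_1$ with $\Jac(X/\iota_X)$ by a dimension count and Riemann--Hurwitz---is essentially the paper's Lemma~\ref{L:torelli_aut} and is sound. The serious gap is in your trichotomy for the quartic cover $X\to\PP^1$. You correctly observe that the geometric Galois group of its closure is one of $\grpC_4$, $\grpV_4$, $\grpD_4$, but you then assign the cyclic case $\grpC_4$ to case~\ref{T:galchar:V4cover}. That cannot work: a geometrically cyclic quartic cover has a \emph{unique} intermediate quadratic subcover, namely $C_1$ itself, so there are no curves $C_2,C_3$ fitting Diagram~\ref{fig:V4nh}, and nothing forces $J_2=\Prym(X,C_1)$ to split as a product. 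The theorem is true only because $\grpC_4$ cannot occur, and that requires an argument, which the paper supplies: the branch divisor $R$ of $X\to C_1$ is an $\iota$-invariant effective degree-$2$ divisor; if it is a fibre of $C_1\to\PP^1$ one is in the $\grpV_4$ case, and otherwise $R$ is a sum of two distinct branch points of $C_1\to\PP^1$, so with $\div(f)=R-2\alpha$ the divisor of $f\cdot\iota(f)$ is $2\bigl(R-(\alpha+\iota\alpha)\bigr)$ with $R-(\alpha+\iota\alpha)$ a nontrivial $2$-torsion class (as $g(C_1)\geq 1$); hence $f\cdot\iota(f)$ is not a square and $X\to\PP^1$ is not Galois. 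Without this step your case division does not match the three cases of the statement.

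Two further points are asserted where work is needed. In the $\grpD_4$ case the curve $C_2$ is a quotient of the Galois closure $X!$, not of $X$, so $J_2\simeq\Jac(C_2)$ is not an instance of the complementary-subvariety argument; the paper exhibits the correspondence map $\phi_*\psi^*\:\Jac(C_2)\to\Jac(X)$, shows $\pi^*+\phi_*\psi^*$ is an isogeny with kernel the graph of the Weierstrass-point identification (using that $\pi_*\phi_*\psi^*$ and $\psi_*\phi^*\pi^*$ factor through $\Pic^0$ of genus-$0$ curves), and checks that composed with $(\pi_*,\psi_*\phi^*)$ it is multiplication by $2$. Finally, over a general base field the hyperelliptic quotient of $C_1$ is a priori only a conic; that it is $\PP^1$ is part of the claim and is deduced in each case (from $X/\iota\simeq\PP^1$ for even-genus hyperelliptic $X$, from $g(C_1)=g(C_2)+g(C_3)$ in the $\grpV_4$ case, and from the genus-$0$ double cover $Q\to L_1$ in the $\grpD_4$ case), whereas your write-up assumes it.
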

\begin{figure}[h]
	\centering
	\subcaptionbox{\label{fig:D4}}
{
	\begin{minipage}{0.3\textwidth}
		\[
		\begin{tikzcd}[ampersand replacement=\&]
			\& X! \ar[dl] \ar[d] \ar[dr] \\
			X \ar[d] \& Z \ar[d] \ar[dr] \ar[dl] \& C_2 \ar[d] \\
			C_1 \ar[dr] \& Y \ar[d] \& Q \ar[dl] \\
			\& \PP^1
		\end{tikzcd}
		\]
	\end{minipage}  
}
	\subcaptionbox{\label{fig:V4}}
	{
		\begin{minipage}{0.3\textwidth}
			\[
			\begin{tikzcd}[ampersand replacement=\&]
				\&X\arrow[dr]\arrow[d]\arrow[dl]\\
				C_1\arrow[dr]\& \PP^1\arrow[d] \& C_2\arrow[dl]\\
				\&\PP^1
			\end{tikzcd}
			\]
		\end{minipage}
	}
	\subcaptionbox{\label{fig:V4nh}}
{
	\begin{minipage}{0.3\textwidth}
		\[
		\begin{tikzcd}[ampersand replacement=\&]
			\&X\arrow[dr]\arrow[d]\arrow[dl]\\
			C_1\arrow[dr]\& C_2\arrow[d] \& C_3\arrow[dl]\\
			\&\PP^1
		\end{tikzcd}
		\]
	\end{minipage}
}
	\caption{Diagrams for Theorem~\ref{T:galchar}}
	\label{F:galchar}
\end{figure}
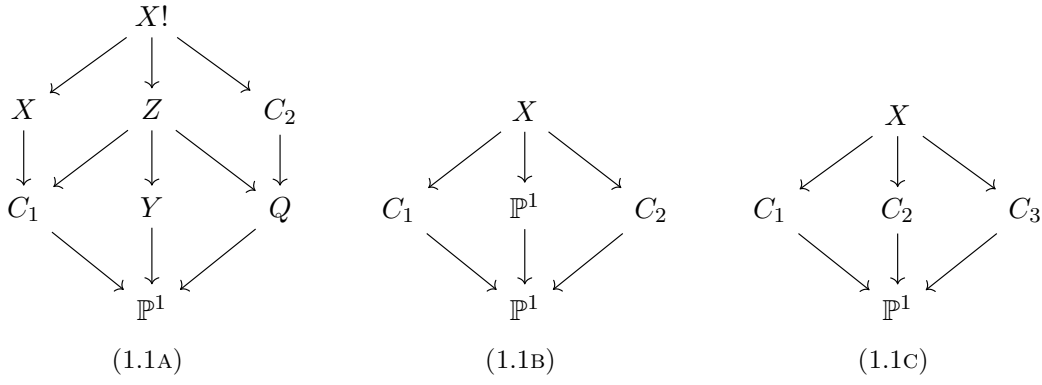

For $g=2$ and $n=2$, the curve $C_1$ is automatically hyperelliptic. Case~\ref{fig:V4nh} would have $C_2,C_3$ of genus $1$, so $X$ would be bielliptic. We ignore that case here, since there are other methods of designing genus $4$ Jacobians that decompose into elliptic curves. Since $1\equiv -1\pmod{2}$, we have that a symplectic (anti)isomorphism $\Jac(C_1)[2]\simeq \Delta \simeq\Jac(C_2)[2]$ corresponds to two points $a,b$ in the moduli space $\sM_2(\Delta)$ of genus $2$ curves with a full $2$-level structure $\Delta$ on their Jacobians (see Section~\ref{S:2level}). This moduli space has a quartic model $\sI^\Delta\subset\PP^4$, which we refer to as an \emph{Igusa quartic} (see Section~\ref{S:igusa}). Its moduli interpretation can be explicitly realized by the fact that the intersection with the tangent space $\sK_a=T_a \sI^\Delta \cap \sI^\Delta$ yields a Kummer quartic surface isomorphic to $\sK_{C_a}=\Jac(C_a)/\langle -1\rangle$. As observed by Van der Geer \cite{vanderGeer1982Siegel}, $\Jac(C_b)$ occurs as a Prym variety of a double cover $X\to C_a$ precisely when $b$ lies on this tangent space, expressed as $\P(a,b)=0$ (see Section~\ref{sec:notation}). As is mentioned in \cite{vanderGeer1982Siegel}, this can be proved over $\CC$ using theta function identities in \cite{Fay1973theta}. We give a synthetic proof of this fact that also holds in positive characteristics other than $2$ in Section~\ref{S:synthetic}. We summarize the conclusions concerning decomposable Jacobians of genus $4$ curves below.

\begin{theorem}\label{T:intro_construction}
Let $C_a,C_b$ be genus $2$ curves corresponding to distinct points $a,b\in \sI=\sI^{\Delta}$. Then $\Jac(C_b)=\Prym(X,D)$ for some quadratic twist $D$ of $C_a$ if and only if $\P(a,b)=0$. Suppose this is the case.
\begin{enumerate}
	\item If $\P(b,a)\neq 0$ then $X$ is not hyperelliptic and a singular quartic plane model of $D$ is $T_a\sI\cap T_b\sI\cap \sI$. The ramification locus of $X\to D$ is supported on the singularity of this model.
	\item If $\P(b,a)=0$ then $T_a\sI\cap T_b\sI\cap \sI$ is a double-counting conic passing through $a,b$ and five other points that are singularities of both $K_a$ and $K_b$.
\end{enumerate}
\end{theorem}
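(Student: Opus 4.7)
The biconditional statement that $\Jac(C_b)$ arises as $\Prym(X,D)$ for some quadratic twist $D$ of $C_a$ if and only if $\P(a,b)=0$ is the synthetic result to be established in Section~\ref{S:synthetic} (following van der Geer's original observation over $\CC$). I therefore take that equivalence for granted and concentrate on the geometric description of $T_a\sI\cap T_b\sI\cap \sI$ under the standing hypothesis $\P(a,b)=0$. The strategy is to put everything in $\sK_a=T_a\sI\cap\sI$, which is the Kummer surface $\Jac(C_a)/\langle-1\rangle$, and then to recognize $\Pi:=T_a\sI\cap T_b\sI$ as cutting out a distinguished hyperplane section of $\sK_a$.

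\textbf{The linear setup and Case (1).} Since $a\ne b$, the tangent hyperplanes $T_a\sI,T_b\sI\subset\PP^4$ are distinct, so $\Pi$ is a $2$-plane, and $\Pi\cap\sI$ has degree $4$ by B\'ezout. By symmetry, $\Pi\cap\sI$ is a hyperplane section of both Kummer surfaces $\sK_a$ and $\sK_b$, and the hypothesis $\P(a,b)=0$ exactly says $b\in\sK_a$. In Case (1) the condition $\P(b,a)\ne 0$ means $a\notin\Pi$, so the situation is genuinely asymmetric. From Theorem~\ref{T:galchar}\ref{T:galchar:generic} applied with $C_1=D$ the cover $X\to D$ is not Galois over $\PP^1$, so $X$ is non-hyperelliptic. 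To identify $\Pi\cap\sI$ with a plane model of $D$, I would use the Abel--Prym / theta-divisor image: $D$ embeds into $\sK_a$ as the divisor cut out on $\sK_a$ by the second tangent hyperplane $T_b\sI$, since $b\in\sK_a$ encodes the $2$-torsion glueing data, and this image is a plane quartic curve of geometric genus $2$. The singular point of the quartic arises because $b$ is the unique point of $\sK_a\cap\Pi$ at which the embedding of $D$ meets itself, and this is precisely where the $2$-torsion data forces the ramification of $X\to D$ to be supported.

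\textbf{Case (2) and the main obstacle.} When $\P(b,a)=0$ as well, both $a,b\in\Pi\cap\sI$ and the configuration is symmetric: $X$ is hyperelliptic and Theorem~\ref{T:galchar}\ref{T:galchar:hyperelliptic} places us in the $V_4$-situation of Diagram~\ref{fig:V4}. I would then invoke the classical fact that a plane section of a Kummer quartic that is a double conic is exactly a trope, containing six nodes of the Kummer surface. Symmetry of $a,b$ forces $\Pi\cap\sI$ to be such a double conic (the only degree $4$ plane curves on a Kummer that split are trope conics with multiplicity two), and by identifying the six nodes of the trope of $\sK_a$ with those shared with $\sK_b$, five of them lie on the common conic alongside $a$ and $b$. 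The main obstacle is the identification carried out in Case (1): showing that $\Pi\cap\sI$ is reduced and irreducible (once $\P(b,a)\ne 0$), that its unique singularity has the right analytic type, and that the abstract curve it represents is $D$ and not just an unrelated genus-$2$ curve. I expect this to come out cleanly by combining a dimension count on the linear system of hyperplane sections of $\sK_a$ through $b$ with the Torelli-type identification already used in Theorem~\ref{T:galchar}.
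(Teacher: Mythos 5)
Your overall strategy---work inside $\sK_a=T_a\sI\cap\sI$ and recognize $T_a\sI\cap T_b\sI$ as a distinguished plane section---is the same as the paper's (Propositions~\ref{P:polar_construction} and~\ref{P:hyperelliptic_construction}), but the proposal omits precisely the steps that carry the mathematical content. First, you take the biconditional ``$\Jac(C_b)=\Prym(X,D)$ iff $\P(a,b)=0$'' for granted; that equivalence \emph{is} the main assertion of the theorem, and the entire point of Section~\ref{S:synthetic} is to prove it synthetically rather than cite van der Geer's theta-function argument. Second, in Case (1) the identification of $T_a\sI\cap T_b\sI\cap \sI$ with a singular model of a twist of $C_a$ is exactly Proposition~\ref{P:description-gauss-map}: the tangent-plane section $T_b\sK_a\cap\sK_a$ equals the Abel--Jacobi image $\bar{\jmath}_{\bar{\fd}}(C_a)$ for the degree-$1$ class $\bar{\fd}\in\sK_a^\vee\simeq\Pic^1(C_a)/\langle\iota\rangle$ corresponding to $b$ under the Gauss map. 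This is the technical core of the paper (proved via Flynn's biquadratic forms), and your appeal to ``the Abel--Prym / theta-divisor image'' merely asserts it; note also that the curve being embedded is $C_a$ itself via its Abel--Jacobi map, not $D$ via an Abel--Prym map. The ramification claim then follows because the fibre of the normalization $C_a\to D_a$ over the singular point is the effective divisor representing $2\fd$, which is the branch divisor of $X\to C_a$---again something you assert rather than derive.

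Two further specific gaps. In Case (1), ``the cover $X\to D$ is not Galois over $\PP^1$, so $X$ is non-hyperelliptic'' presupposes what is to be shown; the correct route is the contrapositive via Theorem~\ref{T:galchar}(\ref{T:galchar:hyperelliptic}): if $X$ were hyperelliptic it would also cover $C_b$ as in Diagram~\ref{fig:V4}, making $\Jac(C_a)$ a Prym of a cover of $C_b$ and forcing $\P(b,a)=0$. In Case (2), ``symmetry of $a,b$ forces $T_a\sI\cap T_b\sI\cap\sI$ to be a double conic'' is not an argument: a priori the section could be a reduced quartic with double points at $a$ and at $b$. The paper's reason is that $\P(a,b)=\P(b,a)=0$ places $a$ on the locus $\bp^{(1)}_{b}=\bp^{(3)}_{b}=0$, which Proposition~\ref{P:kummer-polar} identifies with the six trope-conics of $\sK_b$ through $b$; since a trope is the tangent plane at every point of its conic, $T_a\sI\cap T_b\sI$ \emph{is} that trope, and only then does the classical double-conic description (and the count of $a$, $b$, and five common nodes) apply. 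Without Proposition~\ref{P:description-gauss-map} and the polar analysis of Proposition~\ref{P:kummer-polar}, the proposal is a plan rather than a proof.
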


\begin{remark}
Propositions~\ref{P:polar_construction} and \ref{P:hyperelliptic_construction} give explicit constructions for Diagrams~\eqref{fig:D4} and \eqref{fig:V4} from points $a,b\in\sI$. We paraphrase them more informally here. In the case where $\P(b,a)\neq 0$ then inside $T_b\sI\simeq\PP^3$ we have two objects: We have a model $D_a=T_a\sI\cap T_b\sI\cap \sI$ of $C_a$ with a singularity at $b$ and we have the tangent cone of $\sK_b=T_b\sI\cap\sI$ at $b$, with six lines on it corresponding to the tropes of $\sK_b$ passing through $b$. Projection from $b$ yields a double cover $D_a\to L_a$, with $L_a$ a line, as well as a conic $Q_b$ with six marked points on it, both lying in $\PP^2$. We get a degree $2$ map $Q_b\to L_a$ by sending a point $p\in Q_b$ to $T_pQ_b\cap L_a$, i.e., the intersection of its tangent line to $Q_b$ with $L_a$. This yields $C_a\to\PP^1$ and $C_b\to Q_b\to \PP^1$ which allows the construction of \eqref{fig:D4}, where the twist chosen for $C_b$ corresponds to the twist of $X\to C_a$.

In the case where $\P(a,b)=0$, we have that $T_a\sI\cap T_b\sI$ is a trope to both $\sK_a=T_a\sI$ and $\sK_b=T_b\sI$ and we see that $L=T_a\sI\cap T_b\sI\cap \sI$ is a conic passing through $a,b$, as well as five singularities that are common to both $\sK_a$ and $\sK_b$. This marks exactly the ramification loci on a common genus $0$ curve $L$ to obtain a diagram of the form $\eqref{fig:V4}$.
\end{remark}

In Theorem~\ref{T:intro_construction} above, we identify $\Jac(C_b)$ as the Prym variety associated to a double cover of a singular plane section $D$ of $\sK_b$.
This can be seen as a degeneration of a more general construction: a sufficiently general plane section of $\sK_b$ yields a nonsingular plane quartic curve $D$ and the double cover $\Jac(C_b)\to \sK_b$ yields an unramified double cover $X\to D$ such that $\Jac(C_b)=\Prym(X,D)$.
Verra~\cite{Verra1987prym} identifies the fibre of the Prym map (i.e., the moduli space of unramified double covers $X\to D$ such that $\Prym(X,D)=\Jac(C_b)$) as birational to a quotient of the space of plane sections of $\sK_b$. Over a non-algebraically closed base field, not all such genus $3$ curves $D$ allow a degree $4$ model on $\sK_b$ and the natural question arises of which ones do.

The unramified double cover $\pi\colon X\to D$ implies there is a filtration of Galois modules
\[0\subset V_1 \subset V_5\subset \Jac(D)[2] \text{ with } V_5/V_1\simeq \Jac(C_b)[2],\]
where $V_1$ is the kernel of $\pi^*\colon \Jac(D)\to\Jac(X)$ and $V_5$ is the subspace that pairs trivially with $V_1$. We prove the following in Section~\ref{S:prymg3}.

\begin{proposition}
Let $C$ be a genus $2$ curve and suppose that $X\to D$ is an unramified double cover with $D$ of genus $3$ such that $\Jac(C)=\Prym(X,D)$. Then 
\begin{enumerate}
	\item $D$ admits a model as a plane section of $\sK_C$ if and only if $0\to V_1\to V_5\to \Jac(C)[2]\to 0$ is split, so that $\Jac(C)[2]$ is a direct symplectic summand of $\Jac(D)[2]$.
	\item $D$ admits a model as a plane section of the projective dual $\sK_C^\vee$ if and only if $D$ has even and odd theta characteristics $\theta_e,\theta_o$ over $k$ such that $V_1$ is generated by $\theta_e-\theta_o$.
\end{enumerate}
\end{proposition}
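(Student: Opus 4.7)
My plan is to approach both parts by lifting the embedding question from $\sK_C$ or $\sK_C^\vee$ back to the abelian surface $A := \Jac(C)$ via the quotient map $A \to A/\langle[-1]\rangle$. The Kummer surface $\sK_C$ is the image of $A$ in $\PP^3$ under the complete linear system $|2\Theta|$ for an even symmetric theta divisor $\Theta$ on $A$, while $\sK_C^\vee$ arises from $|2\Theta_\eta|$ for an odd symmetric theta divisor $\Theta_\eta = t_\eta^*\Theta$ with $\eta \in A[2]$ odd. A plane section of $\sK_C$ (respectively $\sK_C^\vee$) therefore pulls back to a curve $\tilde D$ in $|2\Theta|$ (respectively $|2\Theta_\eta|$) on $A$, and $\tilde D$ is $[-1]$-invariant with quotient equal to the plane section. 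Under the hypothesis $\Prym(X,D) \simeq A$, the uniqueness of the unramified double cover of $D$ determined by $[\sL] \in V_1$ forces $\tilde D \simeq X$. So each part becomes equivalent to asking when $X$ admits a $k$-rational embedding into $A$ whose image lies in $|2\Theta|$ or in $|2\Theta_\eta|$.

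For part (1), assume the lift $X \hookrightarrow A$ in $|2\Theta|$ is given. I plan to use the theta-group structure of $\mathcal{O}_A(2\Theta)$ to extract from the embedding a canonical $\FF_2$-linear homomorphism $s\colon A[2] \to \Pic(D)[2]$: informally, for each $\alpha \in A[2]$, the choice of $X$ inside $|2\Theta|$ rigidifies (via the Heisenberg action) a $2$-torsion class $s(\alpha) \in \Pic(X)[2]$ that descends to $\Pic(D)[2]$ by $[-1]$-equivariance. The central claims to verify are that $s$ lands in $V_5$ and splits the sequence $0 \to V_1 \to V_5 \to A[2] \to 0$, and that the whole construction is Galois-equivariant. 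For the converse, I would invoke the moduli interpretation of $\sK_C$ via the Igusa quartic developed in Section~\ref{S:igusa}: a Galois-equivariant splitting $W \hookrightarrow V_5$ with $W \simeq A[2]$ symplectically provides exactly the $k$-rational level-$2$ data on $\Jac(D)$ compatible with the one on $A$ that is required to realize $D$ as a plane section of $\sK_C$, up to a quadratic twist controlled by $V_1$.

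For part (2), the lift $X \in |2\Theta_\eta|$ together with adjunction on $A$ produces a theta characteristic $\kappa := \Theta_\eta|_X$ on $X$, since $K_X = (2\Theta_\eta)|_X = 2\kappa$. Symmetry of $\Theta_\eta$ gives $\sigma^*\kappa = \kappa$, where $\sigma$ is the double-cover involution on $X$, so $\kappa$ descends to a class $\theta \in \Pic(D)$ with $\pi^*\theta = \kappa$, unique up to adding $[\sL]$. A parity computation comparing the two lifts via the $\sigma$-action on $H^0(X,\kappa) \cong H^0(A,\mathcal{O}_A(\Theta_\eta))$, together with the fact that $\Theta_\eta$ is odd symmetric, will show that precisely one lift is an odd theta characteristic $\theta_o$ on $D$ and the other an even theta characteristic $\theta_e$, both over $k$, so that $[\sL] = \theta_e - \theta_o$. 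Conversely, given $k$-rational $\theta_e$ and $\theta_o$ of opposite parity with $\theta_e - \theta_o = [\sL]$, their common pullback $\pi^*\theta_o = \pi^*\theta_e$ is a $\sigma$-invariant theta characteristic on $X$ and, by descent to $A$, corresponds to an odd symmetric theta divisor $\Theta_\eta$; the linear system $|2\Theta_\eta|$ then embeds $A$ as $\sK_C^\vee$ with $X \in |2\Theta_\eta|$, realizing $D$ as a plane section of $\sK_C^\vee$.

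The main obstacle in both parts is propagating $k$-rationality and parity information through the double descent from $A$ to $X$ to $D$. In part (1), the hardest step is producing the homomorphism $s$ intrinsically and verifying it lands in $V_5$; the key input should be that $t_\alpha^*X$ and $X$ are linearly equivalent on $A$, which forces the obstruction to $s(\alpha)$ pairing with $[\sL]$ to vanish. In part (2), the crux is the parity calculation: the $\sigma$-equivariance of $H^0(A,\mathcal{O}_A(\Theta_\eta))$ must match the parities of the two candidate descended theta characteristics $\theta_o$ and $\theta_e$ on $D$, and this matching must be shown to be Galois-equivariant. In both parts the reverse direction further relies on the $k$-rationality of the relevant moduli interpretations of $\sK_C$ and $\sK_C^\vee$ to convert the arithmetic data back into projective geometry.
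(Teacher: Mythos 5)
Your overall strategy---lift the plane section to a curve in a linear system $|2\Theta|$ or $|2\Theta_\eta|$ on the abelian surface and read off the arithmetic data from there---captures the right geometric picture over $\kalg$, but it misses the actual content of the proposition, which is purely arithmetic. Over $\kalg$ \emph{every} such $D$ is a plane section of both $\sK_C$ and $\sK_C^\vee$ (this is Verra's description of the fibre of the Prym map), so both conditions are about rationality over $k$. The paper's proof turns on the fact that the Abel--Prym image of $X$ does \emph{not} lie in $A=\Prym(X,D)$ but in the other component $\ker(\pi_*)'$ of $\ker(\pi_*)$, which is a torsor under $A$ of period $2$ whose class is the image of some $\xi\in\H^1(k,\Delta)$; the quotient by the involution is then a \emph{twist} $\sK^{(\xi)}$ of the Kummer surface, and the whole question is whether $\xi$ is the class of $\sK_C$ or of $\sK_C^\vee$. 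Your proposal assumes from the outset a $k$-rational embedding $X\hookrightarrow A$ landing in $|2\Theta|$, and in part (2) a $k$-rational odd symmetric theta divisor $\Theta_\eta$ on $A$ --- but $\sK_C^\vee=\Pic^1(C)/\langle\iota\rangle$ is itself a nontrivial torsor quotient in general, and the six odd symmetric theta divisors are permuted by Galois like the Weierstrass points of $C$, so neither assumption is available over $k$. In effect the ``only if'' directions are assumed rather than proved, and the ``if'' directions (an appeal to ``the moduli interpretation via the Igusa quartic'' producing the section ``up to a quadratic twist controlled by $V_1$'') are asserted, not established; note the proposition allows no quadratic twist of $D$.

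There are also gaps internal to your sketch. The splitting homomorphism $s\colon A[2]\to\Pic(D)[2]$ in part (1) is exactly the delicate point: the theta group of $\mathcal{O}_A(2\Theta)$ is a nonabelian central extension of $A[2]$, so ``rigidifying a $2$-torsion class for each $\alpha$'' in a way that is additive in $\alpha$, lands in $V_5$, and is Galois-equivariant is the whole theorem, and you leave it as an informal claim. The identification $\tilde D\simeq X$ ``by uniqueness'' also needs care, since a priori a plane section of $\sK_C$ induces a double cover classified by some class in $\Jac(D)[2]$ that need not be the given $V_1$. By contrast, the paper's proof sidesteps all linear-system constructions: it shows the $16$ tropes of $\sK^{(\xi)}$ form a $\Delta$-torsor $\Delta'$ with $[\Delta']=\xi$ cutting out the $16$ bitangents $\{\theta:\theta(\epsilon)=1\}$ of $D$, so that $\xi$ is determined by the Galois action on $\Jac(D)[2]$, and then reduces both statements to computer-verified facts about the subgroups $H'\simeq H''\simeq(\ZZ/2)\times\Sp_4(\FF_2)$ of the point stabilizer $H\subset\Sp_6(\FF_2)$ (stabilizer of a node/partition of tropes versus stabilizer of a distinguished trope, equivalently of an odd--even pair of theta characteristics). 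Your part (2) parity computation ($h^0(X,\Theta_\eta|_X)=h^0(A,\Theta_\eta)=1$, forcing one descent to be odd and the other even) is sound as a geometric statement and would be a nice supplement, but it does not by itself address which twist of the Kummer surface $D$ actually sits on over $k$.
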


In Section~\ref{S:point_configuration} we consider the descriptions of $\sM_2(2)$ and $\sM_3(2)$ as birational to the configurations of $6$ and $7$ labelled points in $\PP^3$ respectively and find a surprising relation in terms of the Prym map between these two via the expression of an Igusa quartic as a symmetroid. We prove the following.

\begin{proposition}
Let $p_1,\ldots,p_7\in\PP^3$ be seven points in general position. Then there is a $2$-dimensional linear system of quadrics based at these points. There is a unique point $p_0$ that completes this to a Cayley octad, which determines a curve $D$ of genus $3$. Given $p_0$, the points $p_6,p_7$ determine a pair of odd theta characteristics $\theta_6,\theta_7$ on $D$. Let $X\to D$ be the double cover determined by the two-torsion class $\theta_6-\theta_7$.

Then $\Prym(X,D)=\Jac(C)$ for a genus $2$ curve $C$ that is a double cover of the rational normal curve through $p_0,\ldots,p_5$, ramified over those points.
\end{proposition}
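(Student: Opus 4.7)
My plan is to combine the classical Cayley octad construction of $D$ with part~(2) of the proposition from Section~\ref{S:prymg3}, which characterizes plane models of $D$ on $\sK_C^\vee$ in terms of $k$-rational even/odd theta characteristic pairs.

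First I would justify the octad completion and the construction of $D$. The space of quadrics in $\PP^3$ through seven general points has projective dimension $2$, so forms a net whose base locus has length eight by Bezout, picking out a unique eighth point $p_0$. The curve $D$ arises as the discriminant locus of rank-deficient members of this net, realized as a plane quartic in the $\PP^2$ parametrizing the net and equipped with a distinguished even theta characteristic $\theta_0$ coming from its symmetric determinantal presentation. The classical Cayley dictionary identifies unordered pairs of octad points with the $28$ odd theta characteristics of $D$; in particular the pair $\{p_6,p_7\}$ yields $\theta_6,\theta_7$, and $\theta_6-\theta_7$ is the announced $2$-torsion class. On the other side, the six points $p_0,\ldots,p_5$ lie on a unique rational normal cubic $R\simeq\PP^1$, and the double cover of $R$ branched over these six points is by Riemann--Hurwitz a genus $2$ curve $C$.

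To apply the proposition I would use $\theta_0$ to rewrite $\theta_6-\theta_7$ in the form $\theta_e-\theta_o$ for a $k$-rational even/odd pair of theta characteristics of $D$; the proposition then produces a genus $2$ curve $C'$ with a plane model of $D$ on $\sK_{C'}^\vee$ and with $\Prym(X,D)\simeq\Jac(C')$. The remaining task, and the principal obstacle, is to show $C'\simeq C$.

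My preferred route to this identification exploits the symmetroid presentation of the Igusa quartic developed in Section~\ref{S:point_configuration}. The $4\times 4$ symmetric matrix of linear forms presenting the net of quadrics through the octad, when restricted to the subpencil of quadrics also passing through $p_6$ (or $p_7$), forces a rank drop whose singular loci sweep out the twisted cubic $R$ through $p_0,\ldots,p_5$. Matching this degeneration with the Kummer-theoretic description of $\sK_{C'}^\vee$ provided by the proposition then identifies the six branch points of $C'\to\PP^1$ with $p_0,\ldots,p_5$ on $R$, yielding $C'\simeq C$. The delicate combinatorial point---verifying that it is precisely the unchosen pair $\{p_6,p_7\}$ whose complementary six octad points recover $C$, or equivalently that the level-$2$ structure extracted from the Cayley dictionary agrees with the one induced on $\sK_C^\vee$---is the technical heart of the proof and can be tracked using the $W(E_7)$-equivariance of the $7$-point configuration picture.
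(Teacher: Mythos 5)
You have assembled the right ingredients---the Cayley octad completion, the determinantal plane quartic $D$, and the symmetroid presentation of the Igusa quartic from Section~\ref{S:point_configuration}---but the step you defer as ``the technical heart'' is in fact the entire content of the proposition, and your sketch of it does not go through as written. Two concrete problems. First, Proposition~\ref{P:kummer-plane-sections}(b) does not \emph{produce} a genus $2$ curve $C'$: it presupposes a curve $C$ with $\Jac(C)=\Prym(X,D)$ and then characterizes when $D$ admits a model as a plane section of $\sK_C^\vee$. Invoking it to manufacture $C'$ and then asking whether $C'\simeq C$ is circular; you would still need an independent identification of $\Prym(X,D)$, which is precisely what is to be proved. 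Second, the geometric mechanism you propose for the identification is garbled: every quadric in the net through the octad already passes through $p_6$ and $p_7$, so ``the subpencil of quadrics also passing through $p_6$'' is not a meaningful restriction, and in any case these are the wrong points---the relevant auxiliary system is the hyperplane $\sL_{p_0}$ of quadrics through $p_1,\ldots,p_5$ \emph{and the eighth point} $p_0$, not anything singling out $p_6$ or $p_7$.

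The paper's argument avoids the detour entirely and the key step is one line of projective duality (Proposition~\ref{prop:5+2+1}). Since $p_0$ lies in the base locus of the net $\sL_{p_6,p_7}$, that net is contained in the hyperplane $\sL_{p_0}$; and since $\psi(p_0)$ lies on $\Ssym$, which by Proposition~\ref{prop:determinantal-igusa-model} is the Segre cubic projectively dual to $\Isym$, the hyperplane $\sL_{p_0}$ is \emph{tangent} to $\Isym$. Hence $D=\sL_{p_6,p_7}\cap\Isym$ sits inside the tangent hyperplane section $\sL_{p_0}\cap\Isym$, which by the moduli interpretation of the Igusa quartic is the Kummer surface of the genus $2$ curve attached to the six points $p_0,\ldots,p_5$: the restriction of the symmetric matrix to $\sL_{p_0}$ is exactly the symmetroid construction of $\sK_C^\vee$ from the $3$-dimensional system of quadrics through six points of a rational normal curve, with $C$ the double cover branched there. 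Once $D$ is exhibited as a plane section of this Kummer surface, the conclusion $\Prym(X,D)=\Jac(C)$, with the cover cut out by $\theta_6-\theta_7$, follows from the general theory of Sections~\ref{S:synthetic} and~\ref{S:prymg3}; no matching of an auxiliary $C'$ against $C$ is required. (A smaller slip: the Cayley dictionary attaches one odd theta characteristic to each unordered pair of octad points, so it is the pairs $\{p_0,p_6\}$ and $\{p_0,p_7\}$ that give $\theta_6$ and $\theta_7$, not the single pair $\{p_6,p_7\}$.)
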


Finally, in Section~\ref{S:M2} we consider genus $4$ curves $X$ such that $\Jac(X)$ is $(2,2)$-decomposable into $\Jac(C_a)\times\Jac(C_a)$, so that $M_2(\ZZ)\subset \End\Jac(X)$.
The key observation is that an isomorphism $C_a\to C_b$ establishes an additional symplectic isomorphism $\Jac(C_a)[2]\simeq\Jac(C_b)[2]$ which, together with the gluing identification $\Jac(C_a)[2]\simeq\Delta\simeq\Jac(C_b)[2]$, yields an automorphism $\sigma\in\Sp_4(\FF_2)\simeq S_6$ of $\Delta$, which acts linearly on $\sI^\Delta$.
This leads us to consider the loci $\P(a,\sigma(a))=0$,
the geometry of which depends on the conjugacy class of $\sigma$.

By Theorem~\ref{T:galchar} we see that for non-hyperelliptic $X$, we need a quadratic map $\mu\colon \PP^1\to\PP^1$ that maps the branch locus of $C_b\to\PP^1$ onto itself. Not all degree $6$ loci admit such a map, but the relation $\P(a,\sigma(a))=0$ allows us to describe and construct them; see Example~\ref{E:M2_nonhyp}.

In Section~\ref{S:M2_hyp} we consider the situation where $X$ is hyperelliptic, i.e., $\P(a,\sigma(a))=\P(\sigma(a),a)=0$.
For hyperelliptic $X$ we need an automorphism $\mu\colon \PP^1\to \PP^1$ such that the branch locus of $C_a$ and its image overlap in five points. We classify all families based on the conjugacy classes of $\sigma$ they induce; see Table~\ref{T:hyperellipticM2}.
Surprisingly, for one conjugacy class of $\sigma$ we find that the second relation is automatically satisfied and we find a two parameter family, which is of higher dimension than one would expect; see Example~\ref{E:2dim_hyplocus}.

\section{Notation and preliminaries} \label{sec:notation}

We work over a field $k$, not necessarily algebraically closed, of characteristic distinct from $2$. By a \emph{curve} we normally mean a geometrically integral projective one-dimensional variety defined over $k$. We do encounter singular models of curves, mainly as singular plane curves. For these, we will have a nonsingular curve at hand, together with a birational morphism to the singular model.·

\subsection{Polarity}\label{S:polar}

Let $X\subset\PP^n$ be a degree $d$ hypersurface, described by the vanishing of a homogeneous form $F$. We obtain a form of bidegree $(d-1,1)$ on $\PP^n\times \PP^n$ by considering
\[\P_F(a,b)=\nabla_a F\cdot b=\sum_{i=0}^n \partial_i F|_a\, b_i\]
We have $\P_F(a,b)=0$ precisely when $b$ lies in the tangent space to $X$ at $a$.
By fixing $b$ we get a degree $(d-1)$ form $\bp_b F=\nabla F\cdot b$. This and the iterates of the operator define the \emph{polar hypersurfaces} of $X$ at $b$,
\[
  \bp_{b,F}^{(j)}=\underbrace{\bp_b\circ\cdots\circ \bp_b}_{j \text{ times}}F.
\]
The degree of $\bp_{b, F}^{(j)}$ is $d-j$ and one can check that $\bp_{b, F}^{(d-1)}$ is the equation of the tangent space of $X$ at $b$.

\subsection{Prym varieties}

\begin{definition}
Given a double cover of curves $\pi\colon X\to C$, we write $\Prym(X,C)$ for the connected component containing the identity in the kernel of $\pi_*\colon \Jac(X)\to\Jac(C)$.
\end{definition}

Let $g$ be the genus of $C$ and let $2r$ be the degree of the branch locus of $\pi$. By Riemann-Hurwitz, we have that the genus of $X$ is $2g-1+r$ and hence that $\dim\Prym(X,C)=g-1+r$.
When $r=0,1$, the principal polarization on $\Jac(X)$ restricts to a principal polarization on $\Prym(X,C)$; see \cite{Mumford1974prym}.

When $\pi\colon X\to C$ is unramified, the map $\pi^*\colon \Jac(C)\to\Jac(X)$ has a kernel of order~$2$. When $\pi\colon X\to D$ is ramified, the map $\pi^*\colon \Jac(C)\to\Jac(X)$ is injective.
In either case, $\Prym(X,C)\cap \pi^*\Jac(C)\simeq \Prym(X,C)[2]$.

\subsection{$2$-Level structures}\label{S:2level}

We work over a field $k$ of characteristic different from $2$. Let $C$ be a curve of genus $2$ over $k$ and let $\Jac(C)[2]$ be its Jacobian. Then $\Delta=\Jac(C)[2]$ is a finite separated group scheme of degree $16$ and exponent $2$. It also comes equipped with a Weil pairing $\Delta\times\Delta\to\mu_2$, which is non-degenerate and alternating. Here we write $\mu_2$ for the $2$-torsion subgroupscheme of the multiplicative group scheme $\GG_m$. Over $\kalg$, we have that $\Delta$ is isomorphic to the constant group scheme $(\ZZ/2\ZZ)^{2g}$. Colloquially, by a full $2$-level structure one would mean a choice of symplectic basis, i.e., a specific choice of an isomorphism $(\ZZ/2\ZZ)^{2g}\simeq \Delta$ as symplectic modules.

We generalize this notion so that we can apply it to non-algebraically closed base fields.
Given $\Delta$ over $k$ as described, a full $2$-level structure $\Delta$ on $\Jac(C')$ is an isomorphism $\Delta\to\Jac(C')[2]$, compatible with the pairings.

If $J_1=\Jac(C_1)$ and $J_2=\Jac(C_2)$ both have the same full $2$-level structure then the (anti)diagonal embedding $\Delta\subset (J_1\times J_2)[2]$ has the property that the product pairing on $(J_1\times J_2)[2]$ restricts to the trivial pairing on $\Delta$. Hence, $\Delta$ is a maximal isotropic subgroup. It then follows by \cite{MilneAbVars1986}*{Proposition~16.8} that $A=(J_1\times J_2)/\Delta$ admits a principal polarization. The main motivating question for this paper is to determine when $A$ is isomorphic to a Jacobian of a curve $X$ of genus $4$ as principally polarized abelian variety. We denote this by $A\simeq \Jac(X)$.

\subsection{Genus $2$ curves and their quartic surfaces}
In this section we review the theory of genus $2$ curves and their Kummer surfaces. Let us first fix some terminology. We refer to a quartic surface of degree four in $\PP^3$ with $16$ nodal singularities as a \emph{ Kummer quartic surface}. For such a surface there are also $16$ planes called \emph{tropes} that intersect the quartic in a conic with multiplicity $2$.
Each singularity lies on six tropes and each trope passes through six singularities, forming a classic $(16)_6$ Kummer configuration. The projective dual of a Kummer quartic surface is again a Kummer quartic surface.

For a curve $C$ of genus $2$, both $\Pic^0(C/k)/\langle -1\rangle$ and $\Pic^1(C/k)/\langle \iota\rangle$ admit models as quartic Kummer surfaces that are projectively dual to each other. We review this construction below starting from the latter and in Proposition~\ref{P:description-gauss-map} we give a description of the Gauss map between the two. The ideas are largely classical and to some degree described in \cite{CasselsFlynn1996}*{Chapters 4, 5}. We include a relatively self-contained account here for the convenience of the reader.
 
We fix an affine model
\[C\colon y^2=f(x)=f_6x^6+\cdots+f_0 \text{ where } f\in k[x] \text{ is square-free of degree $5$ or $6$}.\]
The pull-back of $x=\infty$ on the $x$-line to $C$ yields a degree $2$ canonical divisor $\kappa$. The divisor $3\kappa$ is very ample and provides a degree $6$ embedding $\tilde{C}$ in $\PP^4$ with coordinates $(1:x:x^2:x^3:y)=(x_0:x_1:x_2:x_3:y_3)$. The image is the base locus of a linear system $\tilde{\sQ}$ spanned by the quadrics
\[
\begin{aligned}
\tilde{Q}_1&= x_1^2-x_0x_2,\\
\tilde{Q}_2&= x_0x_3-x_1x_2,\\
\tilde{Q}_3&= x_2^2-x_1x_3,\\
\tilde{Q}_4&= f_0x_0^2+f_1x_0x_1+f_2x_1^2+f_3x_1x_2+f_4x_2^2+f_5x_2x_3+f_6x_3^2-y_3^2.
\end{aligned}
\]
The restriction to $y_3=0$ yields a linear system $\sQ=\langle Q_1,Q_2,Q_3,Q_4\rangle$ of quadrics in $\PP^3$. The quadrics $Q_1,Q_2,Q_3$ define a rational normal curve $L$. Projection onto $(x_0:\cdots:x_3)$ expresses $\tilde{C}$ as a double cover of $L$, branched where $Q_4=0$.
It is a classical fact that the locus of singular quadrics in $\sQ$ yields a Kummer quartic surface
\[\sK_C^\vee\colon G=\det(\eta_1Q_1+\eta_2Q_2+\eta_3Q_3+\eta_4Q_4)=0.\]
This particular surface has a distinguished trope $\eta_4=0$.

We consider its ambient space as dual to $\PP^3$, with dual coordinates $(\xi_1:\ldots:\xi_4)$. Under the Gauss map $\gamma\colon (\PP^3)^\vee\dashrightarrow\PP^3$ defined by $p\mapsto \nabla_p(G)$, we have that $\sK_C=\gamma(\sK_C^\vee)$ is again a quartic Kummer surface with $16$ nodal singularities. The distinguished trope on $\sK_C^\vee$ corresponds to the distinguished singularity $(0:0:0:1)$ on $\sK_C$. This surface is isomorphic to the Kummer surface $\Jac(C)/\langle -1\rangle$.

Note that by Riemann-Roch, any degree $3$ divisor class $\fd\in \Pic^3(C)$ is represented by a $1$-dimensional linear system of effective divisors on $C$. The hyperelliptic involution $\iota \: C\to C$ corresponding to $(x,y)\mapsto(x,-y)$ induces an involution $\fd\mapsto 3\kappa-\fd$ on $\Pic^3(C)$.

\begin{proposition} With the notation above, we have a Galois-covariant bijection
	\[\Pic^3(C/\ksep)/\langle \iota \rangle\to \sK_C^\vee(\ksep)\]
Under this bijection, the plane section with $\eta_4=0$ corresponds to the image of divisor classes represented by $p+\kappa$, with $p$ a degree $1$ point on $C$.
\end{proposition}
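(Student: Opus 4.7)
The plan is to construct an explicit map $\Psi\colon\Pic^3(C/\ksep)\to\sK_C^\vee(\ksep)$, verify it is $\iota$-invariant, and check bijectivity. For $\fd\in\Pic^3(C/\ksep)$ general, Riemann--Roch gives $h^0(\fd)=2$, so $|\fd|$ is a pencil $\PP^1$ of effective degree $3$ divisors. For each $D\in|\fd|$, the pushforward $\pi_*D$ is a degree $3$ divisor on $L$; since $L\subset\PP^3$ is a twisted cubic, $\pi_*D$ is cut out by a unique plane $H_D\subset\PP^3$. The morphism $|\fd|\to(\PP^3)^\vee$, $D\mapsto H_D$, is birational onto its image $\Gamma_\fd$: the preimage of a generic hyperplane $\{H:q\in H\}$ (for $q\in L$ with $\pi^{-1}(q)=\{p,\bar p\}$) is the pair of distinct divisors in $|\fd|$ containing $p$ and $\bar p$ respectively, while for generic $\fd$ distinct divisors in $|\fd|$ have distinct pushforwards. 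Thus $\Gamma_\fd$ is a planar conic; write $\Pi_\fd\subset(\PP^3)^\vee$ for its containing plane and $p_\fd\in\PP^3$ for the dual point, which lies on every $H_D$.

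Define $Q_\fd\subset\PP^3$ to be the envelope of the family $\{H_D\}_{D\in|\fd|}$: a quadric cone with vertex $p_\fd$ swept out by the characteristic lines $\ell_D=\lim_{D'\to D}H_D\cap H_{D'}$. The central technical step is to verify $Q_\fd\in\sQ$: this is equivalent to $Q_\fd|_L$ being proportional to $f$, i.e., $Q_\fd$ passing through each of the six Weierstrass points on $L$. For a Weierstrass point $q=\pi(W)\in L$, let $D_W\in|\fd|$ be the unique divisor containing $W$, and write $D_W=W+r_1+r_2$ so that $\pi_*D_W=q+\pi(r_1)+\pi(r_2)$. The ramification of $\pi$ at $W$ causes the map $D\mapsto\pi_*D$ to be second-order in the component at $q$ but first-order in the components at $\pi(r_1),\pi(r_2)$, and a local coordinate computation at $D_W$ then shows that the characteristic line $\ell_{D_W}$ passes through $q$, so $q\in Q_\fd$. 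Combined with $\deg(Q_\fd\cdot L)=6$ from B\'ezout, this forces $Q_\fd\cdot L$ to be exactly the Weierstrass divisor, hence $Q_\fd\in\sQ$. Define $\eta_\fd\in\sK_C^\vee$ to be the point representing the singular quadric $Q_\fd$.

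For $\iota$-invariance, the hyperelliptic involution on $C$ sends $|\fd|\to|\iota\fd|$ by $D\mapsto\iota D$, and since $\pi\circ\iota=\pi$ we have $\pi_*D=\pi_*(\iota D)$; hence $\Gamma_\fd=\Gamma_{\iota\fd}$ and $Q_\fd=Q_{\iota\fd}$. For injectivity of the induced map $\Pic^3/\langle\iota\rangle\to\sK_C^\vee$: the point $\eta_\fd$ determines the cone $Q_\fd$, hence its vertex $p_\fd$ and the conic of tangent planes through $p_\fd$; the latter is canonically $\Gamma_\fd\subset\Sym^3 L$, from which lifting along $\pi$ and taking linear equivalence classes recovers $\{\fd,\iota\fd\}$. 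Surjectivity then follows since both $\Pic^3/\langle\iota\rangle$ and $\sK_C^\vee$ are irreducible $2$-dimensional varieties and $\Psi$ is dominant. The main obstacle of the proof is the membership $Q_\fd\in\sQ$ of Paragraph~2, which requires the local analysis leveraging the ramification of $\pi$.

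Finally, when $\fd=[p+\kappa]$ the representatives in $|\fd|$ are $D=p+\pi^*x$ for $x\in L$, so $\pi_*D=\pi(p)+2x$; every $H_D$ is tangent to $L$ at $x$ and passes through $\pi(p)\in L$, giving $p_\fd=\pi(p)$. The envelope $Q_\fd$ is therefore the quadric cone over $L$ from its vertex $\pi(p)\in L$, which contains $L$ as a ruling; equivalently $Q_\fd\in\langle Q_1,Q_2,Q_3\rangle$, placing $\eta_\fd$ on the trope $\eta_4=0$. As $p$ varies over $C(\ksep)$, the classes $[p+\kappa]$ modulo $\iota$ sweep out the full trope. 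Galois-covariance is immediate because linear equivalence, pushforward, and envelope formation all commute with base change, completing the proof.
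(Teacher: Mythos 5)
Your construction is genuinely different from the paper's. The paper works upstairs in $\PP^4$ with the tricanonically embedded curve $\tilde{C}$: to $\fd$ it associates the quadric $Q_D\in\tilde{\sQ}$ containing the plane $V_D$ spanned by an effective representative $D$, shows $Q_D$ is singular because the hyperplane $\langle V_D,V_{\iota D}\rangle$ cuts it in a rank-$2$ quadric, and identifies the planes $V_{D'}$, $D'\in|\fd|$, as one ruling of $Q_D$. Your cone $Q_\fd$ is exactly the image of $Q_D$ in $\sQ$ (drop the $y_3^2$-term), and your planes $H_D$ are the projections of the ruling planes $V_{D'}$, each tangent to the cone along the image of $V_{D'}\cap\{y_3=0\}$; so the two constructions agree and yours is the dual presentation. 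Your membership criterion ($Q\in\sQ$ if and only if $Q|_L$ is proportional to $f$, since $Q_1,Q_2,Q_3$ span the degree-$2$ part of the ideal of $L$ and $Q_4|_L=f$) is correct and attractive, and the sketched local argument does work: ramification of $\pi$ at $W$ gives $H_{D_t}\ni q+O(t^2)$, so both $h_0$ and $\dot h_0$ vanish at $q$ and $q\in\ell_{D_W}\subset Q_\fd$.

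There are, however, genuine gaps. (1) Injectivity: ``lifting along $\pi$ and taking linear equivalence classes recovers $\{\fd,\iota\fd\}$'' fails as stated, because $\pi^*(\pi_*D)=D+\iota D\sim 3\kappa$ for \emph{every} $D$, so the lift carries no information about $\fd$. You must additionally argue that the only coherent choices of degree-$3$ subdivisors $E_H\le \pi^*(H\cdot L)$ with $\pi_*E_H=H\cdot L$ that form a single linear system as $H$ runs over $\Gamma_\fd$ are $E_H=D_H$ and $E_H=\iota D_H$ (for any other choice the class $[E_H-D_H]=\sum n_i[\bar p_i-p_i]$ varies with $H$). (2) Your map is undefined precisely at the points that should hit the $16$ nodes of $\sK_C^\vee$: when $2\fd=3\kappa$ the involution $\iota$ preserves $|\fd|$, the map $D\mapsto \pi_*D$ is $2$-to-$1$ onto a \emph{line}, and there is no envelope quadric. ``Dominant between irreducible surfaces'' only yields a dense constructible image, so it neither shows these nodes are in the image nor yields a bijection on $\ksep$-points; the paper's construction is defined for every class and so avoids this. (3) The step you flag as the main obstacle is asserted rather than proved; as noted it is completable, but it needs to be written out for the argument to stand.
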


\begin{proof}
Any degree $3$ divisor class $\fd$ admits an effective divisor $D$ with separated support. This support spans a plane $V_D$. Furthermore, since $D+\iota D$ is linearly equivalent to $3\kappa$, we see that $V_D\cup V_{\iota D}$ spans a hyperplane that intersects $\tilde{C}$ in $D+\iota D$.

The linear system $\tilde{\sQ}$ restricted to $V_D$ has a base locus of degree at least $3$ and therefore restricts to a system of dimension at most $2$ on $V_D$. Hence, there is a quadric $Q_D\in \tilde{\sQ}$ that contains $V_D$. The hyperelliptic involution has trivial action on $\tilde{\sQ}$, so the plane $V_{\iota D}$ is also contained in $Q_D$.
The hyperplane associated to the divisor $D + \iota D$ intersects $Q_D$ in a quadric of rank at most $2$, so $Q_D$ itself is of rank at most $4$, and hence singular.

Furthermore $\tilde{\sQ}$ contains no quadrics of rank less than $3$, since that would imply that $\tilde{C}$ lies in the union of two hyperplanes, which it does not.

We have that
\[\det(\eta_1\tilde{Q}_1+\eta_2\tilde{Q}_2+\eta_3\tilde{Q}_3+\eta_4\tilde{Q}_4)=\eta_4 G.\]
We show that $V_D$ must lie on a quadric $\tilde{Q}$ for which $G=0$. Suppose it does not. Then $\eta_4=0$, so $\tilde{Q}$ is a cone over a quadric $Q\in\langle Q_1,Q_2,Q_3\rangle$.
The rational normal curve $L$ is consequently contained in $Q$. 
If $Q$ is of rank $4$, then the two systems of lines on $Q$ intersect $L$ with multiplicity $2$ and $1$. But then the intersections $V_D \cap C,V_{\iota D} \cap C$ are pull-backs of divisors on $L$ and they have degree $4$ and $2$ respectively. But this contradicts that both $D$ and $\iota D$ of degree $3$ are contained in these intersections respectively.

The intersection of $\sK_C^\vee$ with $\eta_4=0$ consists of quadrics of rank $3$, with the singular locus intersecting $\tilde{C}$. This corresponds to divisors of the form $D=P+\kappa$, for which the complete linear system has a base point $P$. The plane $V_D$ intersects in $P+\iota P+\kappa$, and $V_D=V_{\iota D}$: in this case, $D$ and $\iota D$ can be distinguished by the base point.

Indeed, a quadric $Q$ in $\PP^4$ of rank $4$ has two rulings of planes on it, adding up to the hyperplane sections. These rulings cut out $1$-dimensional linear systems of divisors on $\tilde{C}$, adding up to $3\kappa$. 

For a quadric of rank $3$ there is just one ruling. As we saw above, for those with $\eta_4=0$, the planes actually cut out a $1$-dimensional linear system of degree $4$ divisors with a base locus of degree $2$: choosing one of the two base points gives us a linear system of degree $3$ divisors.

Combining the two cases, we see that the map $\{D, \iota D\} \mapsto Q_D$ defines the desired Galois covariant bijection.
\end{proof}

The bijection $\Pic^1(C/\ksep)\to \Pic^3(C/\ksep)$ induced by $\fd\mapsto \fd+\kappa$ yields the following.

\begin{corollary}\label{cor:pic1-kummer_dual}
We have $\Pic^1(C/\ksep)/\langle \iota\rangle \simeq \sK_C^\vee(\ksep)$ and the image of the natural embedding $C\to\Pic^1(C)$, consisting of the degree $1$ classes that admit an effective representative, lies in $\eta_4=0$.
\end{corollary}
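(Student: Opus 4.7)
The plan is to deduce the corollary from the preceding proposition by transporting its identification along the translation isomorphism $\Pic^1(C) \to \Pic^3(C)$, $\fd \mapsto \fd + \kappa$. First I would verify that this translation is equivariant for the hyperelliptic involution: the action of $\iota$ on $\Pic^3$ was given by $\fd \mapsto 3\kappa - \fd$, while on $\Pic^1$ one has $\fa \mapsto \kappa - \fa$, inherited from the relation $p + \iota p \sim \kappa$ for any point $p \in C$. Translation by $\kappa$ intertwines the two, so the isomorphism descends to a bijection of quotients.

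Composing with the Galois-covariant bijection of the proposition then yields
\[
\Pic^1(C/\ksep)/\langle \iota \rangle \xrightarrow{\fa \mapsto \fa+\kappa} \Pic^3(C/\ksep)/\langle \iota \rangle \xrightarrow{\sim} \sK_C^\vee(\ksep),
\]
which is the first claim. Galois covariance is automatic here, since $\kappa$ is a canonical, hence $k$-rational, divisor class, and the proposition's bijection is already Galois covariant.

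For the second claim, the natural embedding $C \to \Pic^1(C)$ sends a point $p$ to the class $[p]$; translated by $\kappa$ this becomes the class of $p + \kappa \in \Pic^3(C)$. These are exactly the divisor classes singled out in the final paragraph of the proof of the proposition, where the associated rank-$3$ quadrics were identified with the plane section $\eta_4 = 0$ of $\sK_C^\vee$. Hence the image of $C$ under the natural embedding, transported across the corollary's isomorphism, lands inside $\eta_4 = 0$.

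I do not expect any genuine obstacle in carrying this out: the argument is essentially a bookkeeping check, and the only place requiring care is the equivariance between the $\iota$-actions on $\Pic^1$ and $\Pic^3$, where one must correctly match the sign and the shift by $\kappa$.
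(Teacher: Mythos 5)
Your proposal is correct and is exactly the paper's argument: the corollary is obtained by transporting the preceding proposition's bijection along $\fd\mapsto\fd+\kappa$, and your equivariance check $(\kappa-\fa)+\kappa=3\kappa-(\fa+\kappa)$ supplies the one detail the paper leaves implicit. The identification of the image of $C$ with the classes $p+\kappa$ lying in $\eta_4=0$ likewise matches the final paragraph of the proposition's proof.
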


For a degree $1$ class $\fd\in\Pic^1(C)$ we obtain an Abel-Jacobi map
\[j_\fd\colon C\to \Jac(C); \quad p\mapsto p-\fd.\]
When we compose this with $\Jac(C)\to \sK_C$, then we obtain a map that only depends on the class $\bar{\fd}$ modulo $\iota$. denoted by
\[\bar{\jmath}_{\bar{\fd}}\colon C\to \sK_C; \quad p\mapsto \overline{p-\fd}.\]

It is well-known that the image of $j_\fd$ gives a theta divisor in $\Jac(C)$. Over a non-algebraically closed field $k$, there may be no such divisors over $k$, but there is a divisor $2\Theta_C$ over $k$, for instance from the map $p\mapsto 2p-\kappa$. We have $|2\Theta_C|=\PP^3$ and this linear system yields the map $\Jac(C)\to \sK_C$. Since the inverse image of $\bar{\jmath}_{\bar{\fd}}$ is $j_\fd(C)+j_{\iota \fd}(C)$, which is linearly equivalent to $2\Theta_C$, we see that $\bar{\jmath}_{\bar{\fd}}(C)$ is a plane section $H_{\bar{\fd}}\cap \sK_C$.

We see that two distinct points $p,q\in C$ have the same image under $C\to \bar{\jmath}_{\bar{\fd}}(C)$ if $[p]-\fd=\fd-[q]$, i.e., if $2\fd=[p+q]$. Hence, if $2\fd\neq \kappa$ then the map is birational and the fibre over the singular point consists of the effective divisor representing $2\fd$. If $2\fd=\kappa$ then the map is two-to-one.

By Corollary~\ref{cor:pic1-kummer_dual} we can identify $\bar{\fd}$ with a point on $\sK_C^\vee$. This point is nonsingular unless $2\fd=\kappa$. For nonsingular $\bar\fd$, projective duality associates to $\bar\fd$ a tangent plane $T_a(\sK_C)$ with $a=\gamma(\bar{\fd})$.
The following well-known result (see \cite[pp. 435-436]{Verra1987prym} or \cite{Catanese2021kummer})) establishes that this plane agrees with $H_{\bar\fd}$. We include a proof here that avoids direct use of theta function identities.

\begin{proposition}\label{P:description-gauss-map}
Let $\bar\fd \in \sK_C^\vee$ be a nonsingular point.
Then
\[T_a(\sK_C)\cap \sK_C = \bar{\jmath}_{\bar{\fd}}(C)\text{ for } a=\gamma(\bar{\fd}).\]
\end{proposition}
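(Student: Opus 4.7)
The plan is to identify the hyperplane $H_{\bar\fd}$ cutting $\bar{\jmath}_{\bar{\fd}}(C)$ out of $\sK_C$ with the tangent plane $T_a(\sK_C)$. The preceding discussion shows $\bar{\jmath}_{\bar{\fd}}(C)=H_{\bar\fd}\cap\sK_C$ is a degree-four plane section of $\sK_C$; the right-hand side $T_a(\sK_C)\cap\sK_C$ is also such a plane section. My strategy is to exhibit a singular point $a'$ of $\bar{\jmath}_{\bar{\fd}}(C)$ at a smooth point of $\sK_C$, which forces $H_{\bar\fd}=T_{a'}(\sK_C)$, and then to identify $a'$ with $a=\gamma(\bar\fd)$.

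The first step is essentially contained in the preceding discussion. For generic $\fd$ (in particular $2\fd\neq\kappa$), Riemann--Roch gives a unique effective representative $p+q$ of $|2\fd|$, and the relation $p-\fd=-(q-\fd)$ in $\Jac(C)$ gives $\bar{\jmath}_{\bar{\fd}}(p)=\bar{\jmath}_{\bar{\fd}}(q)=\overline{p-\fd}=:a'$, producing a node of $\bar{\jmath}_{\bar{\fd}}(C)$. For generic $\fd$ the class $\fd-p$ is not two-torsion, so $a'$ is a smooth point of $\sK_C$; the degenerate cases will follow by specialization. A hyperplane cutting a smooth surface in a curve singular at a smooth point of the surface must be tangent there, so $H_{\bar\fd}=T_{a'}(\sK_C)$, and the proposition reduces to the identity $a'=\gamma(\bar\fd)$.

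For this identification I would use the quadric description. Under the bijection $\Pic^3(C)/\iota\simeq\sK_C^\vee$, the point $\bar\fd$ (shifted to $\fd+\kappa\in\Pic^3$) corresponds to a rank-three quadric $Q_{\bar\fd}=\sum\eta_iQ_i\in\sQ$ with a unique vertex $v\in\PP^3$. Combining $\mathrm{adj}(Q_{\bar\fd})=c\cdot vv^{T}$ with $\partial G/\partial\eta_i=\mathrm{tr}(\mathrm{adj}(Q_{\bar\fd})Q_i)$ yields $\gamma(\bar\fd)=\bigl(Q_1(v):\cdots:Q_4(v)\bigr)$. The vertex $v$ lies on the line $V_D\cap V_{\iota D}\subset\PP^3$ for every effective $D\in|\fd+\kappa|$; by varying $D$ in the pencil $|\fd+\kappa|$ (of dimension one by Riemann--Roch) one can pin $v$ down synthetically, and projection of $\tilde C\subset\PP^4$ from a lift of $v$ exhibits $C$ as a double cover of $\PP^1$ whose branch pair is $p+q$.

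The main obstacle is the final matching step: to show that the coordinates $\bigl(Q_1(v):\cdots:Q_4(v)\bigr)$ agree with the Kummer coordinates of $\overline{p-\fd}$ under the $|2\Theta_C|$-embedding of $\Jac(C)$. Classical proofs effect this identification through theta function identities; a synthetic derivation must extract it purely from the projective geometry of the double cover $\tilde C\to L$ and the pencils of quadrics involved, tracking how the divisor $p+q$ appears simultaneously from the quadric side (as the base pair pinning down $v$) and from the Abel--Jacobi side (as the collapsing fibre of $\bar{\jmath}_{\bar{\fd}}$).
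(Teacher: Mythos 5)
Your first step is sound and coincides with the paper's: the image $\bar{\jmath}_{\bar{\fd}}(C)$ is a plane section of $\sK_C$ that is singular at the common image $a'$ of the two points $p,q$ with $2\fd=[p+q]$, and since $a'$ is (generically) a smooth point of $\sK_C$, the cutting plane must be $T_{a'}(\sK_C)$. This correctly reduces the proposition to the single identity $a'=\gamma(\bar\fd)$. Your formula $\gamma(\bar\fd)=\bigl(Q_1(v):\cdots:Q_4(v)\bigr)$, obtained from Jacobi's rule $\partial G/\partial\eta_i=\operatorname{tr}(\operatorname{adj}(Q_{\bar\fd})\,Q_i)$ and the rank-one adjugate of a corank-one Gram matrix, is also correct and is a reasonable way to get your hands on $\gamma(\bar\fd)$.

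The problem is that the reduction is where the proposition's entire content lives, and you do not close it: you name the matching of $\bigl(Q_1(v):\cdots:Q_4(v)\bigr)$ with the $|2\Theta_C|$-coordinates of $\overline{p-\fd}$ as ``the main obstacle'' and then only describe what a synthetic argument \emph{would} have to track, without producing one. There is no reason to expect this identification to fall out of soft projective geometry alone --- it is precisely the cross-ratio-level coincidence that classical proofs extract from theta identities. The paper closes exactly this gap by a different device: it characterizes the unknown point $a=\bar\fa$ intrinsically by the condition that both $\fd+\fa$ and $\fd-\fa$ admit effective representatives, i.e.\ $\eta_4(\overline{\fd+\fa})=\eta_4(\overline{\fd-\fa})=0$, notes that this determines $\{\fa,-\fa\}$ uniquely, and then verifies that $a=\gamma(\bar\fd)$ satisfies this condition by showing $B_{1j}(a,b)=B_{i1}(a,b)=0$ for Flynn's biquadratic forms $B_{ij}$ on the generic curve over $\QQ(f_0,\ldots,f_4)$ --- an explicit polynomial identity checked by computer algebra, with the general case (including your ``degenerate cases'' and positive characteristic) following by specialization over $\ZZ[\frac{1}{2}]$. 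If you want to complete your version, you would need to carry out an analogous explicit computation comparing $\bigl(Q_1(v):\cdots:Q_4(v)\bigr)$ with the Kummer coordinates $\xi_i(\overline{p-\fd})$; as it stands, the proof is incomplete.
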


\begin{proof}
Note that the result is stable under base field extension, so it is sufficient to check over algebraically closed base fields.
As discussed above, the map $C\to \bar\jmath_{\bar{\fd}}(C)$ has a singular image and the two points $p,q$ mapping to the singularity satisfy $2\fd=[p+q]$. Let $\fa=[p]-\fd$ and $-\fa=\fd-[q]$ be the two points on $j_\fd(C)$ that map to the singularity. Then $a=\bar{\fa}$ is the singularity of $\jmath_{\bar{\fd}}(C)$ and hence the image is indeed $T_a(\sK_C)\cap \sK_C$.

This means exactly that $[p]=\fd+\fa$ and $[q]=\fd-\fa$, i.e., that $\fd+\fa$ and $\fd-\fa$ admit effective representatives.
By Corollary~\ref{cor:pic1-kummer_dual} this means that
\begin{equation}\label{E:eta_characterization}
	\eta_4(\overline{\fd+\fa})=\eta_4(\overline{\fd-\fa})=0
\end{equation} The pair $\{\fa,-\fa\}$, and therefore $a$, is fully determined by this property.

Over an algebraically closed base field, $C$ has a rational Weierstrass point $\theta$ and we can change coordinates so that $f_6 = 0, f_5 = 1$ and $x(\theta) = \infty$. Then $\sK_C^\vee$ has a node at $(0:0:0:1)$, and the map $\bar{\fd}\mapsto \overline{{\fd}-\theta}$ defines an isomorphism $\sK_C^\vee \to \sK_C$ given by
$(\xi_1:\xi_2:\xi_3:\xi_4)=(\eta_4 : -\eta_3 : \eta_2 : - \eta_1)$.
Let $\fb=\fd-[\theta]$. Then \eqref{E:eta_characterization} becomes $\xi_1(\overline{\fb+\fa})=\xi_1(\overline{\fb-\fa})=0$. Let us write $b=\bar\fb$.

The group law on $\Jac(C)$ yields biquadratic forms $B_{ij}(a,b)$ (see \cite{Flynn1993}), characterized by the property that for  $\fa,\fb\in\Jac(C)$ and $a=\bar{\fa}$ and $b=\bar{\fb}$ we have
\[
    B_{ij}(a,b)=
    \xi_i(\fb+\fa)\xi_j(\fb-\fa)
    +\xi_i(\fb-\fa)\xi_j(\fb+\fa).
\]
These forms correspond to theta function relations upon identification of the coordinates $\xi_i$ with the appropriate theta functions, but the relations can be checked directly algebraically.

Now we take a generic point $\bar\fd$ on $\sK_C^\vee$ of the generic curve over $\QQ(f_0,f_1,f_2,f_3,f_4)$ and take $b=\overline{\fd-\theta}\in\sK_C$ via the isomorphism and $a=\gamma(\bar\fd)$.
Computation reveals that
$B_{1j}(a,b)=B_{i1}(a,b)=0$ for $i,j=1,\ldots,4$, which implies that \eqref{E:eta_characterization} holds.
See \cite{electronic_resources} for a transcript of such a check using a computer algebra system, but a very persistent reader could perform these computations by hand. It follows that for $a=\gamma(\bar\fd)$ we indeed get the desired plane section.

In fact, careful consideration shows this result holds over $\ZZ[\frac{1}{2}]$, so that the general result follows by specialization for any curve over a field of characteristic other than $2$. 
\end{proof}

\begin{corollary}\label{C:fod_obstruction}
If $\sK$ is a Kummer quartic surface over a field $k$ with a nonsingular $k$-rational point then $\sK=\sK_C$ for some genus $2$ curve $C$ over $k$.
\end{corollary}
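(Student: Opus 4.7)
My plan is to recover the curve $C$ explicitly from the data $(\sK, a)$ using a tangent-plane section, and then argue that the construction of $\sK_C$ from $C$ described earlier in this section reproduces $\sK$.

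First I would form the scheme-theoretic intersection $D := T_a \sK \cap \sK$, a plane quartic defined over $k$. Since $a$ is a nonsingular $k$-point of $\sK$, the hyperplane $T_a \sK$ is tangent to $\sK$ at $a$, so $D$ is singular at $a$. Working over $\kalg$, I would invoke classical Kummer theory to identify $\sK_{\kalg}$ projectively with $\sK_{C'}$ for some genus~$2$ curve $C'$ over $\kalg$. Under this identification, $a$ maps to a nonsingular point of $\sK_{C'}$, and Proposition~\ref{P:description-gauss-map} identifies the tangent plane section there with the plane model $\bar\jmath_{\bar\fd}(C')$, which is an irreducible plane quartic with a single node whose normalization is $C'$. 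It follows that $D$ is geometrically integral with a single node at $a$, and its normalization $C$ is a smooth projective genus-$2$ curve defined over $k$.

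Next I would apply the Kummer construction of the preceding subsections to $C$ to obtain $\sK_C \subset \PP^3$ over $k$. After base change to $\kalg$, both $\sK$ and $\sK_C$ are projectively equivalent to the Kummer of $\Jac(C_{\kalg})$, so they are $\PGL_4(\kalg)$-equivalent. To lift this equivalence to $k$, I would match the $k$-rational geometric data on each side: the point $a$ on $\sK$ with a corresponding nonsingular point on $\sK_C$ whose tangent plane section realises the same plane quartic $D$. Since all matched data is Galois-invariant, the resulting projective equivalence should descend.

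The main obstacle is exactly this descent step. The $\PGL_4(\kalg)$-equivalence between $\sK_{\kalg}$ and $\sK_{C,\kalg}$ is determined only up to the action of $\Aut(\sK_C)$, so descent amounts to controlling a class in $H^1(k, \Aut(\sK_C))$. The hypothesis that $a$ is $k$-rational is the essential rationality input that trivialises this obstruction by furnishing matching $k$-rational marked points on both surfaces; without such a point, $\sK$ could a priori be a nontrivial twist of $\sK_C$, which is why the hypothesis on $\sK$ is exactly the existence of a nonsingular $k$-rational point.
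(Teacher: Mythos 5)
There is a genuine gap: your argument silently assumes that the tangent plane section $D = T_a\sK\cap\sK$ is a geometrically integral quartic with a single node, but this fails exactly when the nonsingular point $a$ lies on one of the $16$ tropes of $\sK$. In that case $T_a\sK$ \emph{is} the trope, and $D$ is a double-counting conic, not an irreducible nodal quartic; there is no normalization-of-a-plane-quartic to speak of. This is not a negligible degenerate case to be absorbed into ``classical Kummer theory'': in the paper's own setup this corresponds to $2\fd=\kappa$ in the discussion preceding Proposition~\ref{P:description-gauss-map}, where the map $C\to\bar\jmath_{\bar\fd}(C)$ is two-to-one onto a conic rather than birational onto a nodal quartic. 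The paper's proof splits into exactly these two cases, and in the trope case the hypothesis that $a$ is a $k$-rational point does different work: it gives the conic a rational point, hence an isomorphism with $\PP^1$ over $k$, and $C$ is then recovered as the double cover of this $\PP^1$ branched over the degree~$6$ locus cut out by the six nodes of $\sK$ lying on the conic. Your proposal needs this second branch to be a complete proof.

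In the generic case (when $a$ is not on a trope) your route is essentially the paper's: the tangent section gives a singular plane model of $C$ over $k$ via Proposition~\ref{P:description-gauss-map}. The descent discussion you flag as ``the main obstacle'' is where your write-up and the paper diverge in emphasis: the paper does not set up an $\H^1(k,\Aut(\sK_C))$ torsor argument at all, and treats the identification $\sK=\sK_C$ as following once a curve $C$ over $k$ with the right tangent section has been produced. If you do want to carry out the cocycle argument, you would need to actually exhibit enough Galois-invariant marked data (the distinguished node and the labelled $(16)_6$ configuration, say) to trivialize the relevant class, rather than asserting that matching $a$ with a point on $\sK_C$ ``should'' suffice; as written that step is a placeholder, not a proof.
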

\begin{proof}
First suppose that $a\in\sK(k)$ does not lie on a trope. Then Proposition~\ref{P:description-gauss-map} shows that $T_a(\sK)\cap \sK$ gives a singular model for $C$, defined over the base field. Otherwise, $T_a(\sK)\cap \sK$ gives a double-counting conic. But this conic has a rational point $a$, so it is isomorphic to $\PP^1$. The six nodes the conic passes through mark a locus $B$ and we get a model for $C$ by taking the a double cover of $\PP^1$ branched over $B$.
\end{proof}

\subsection{Igusa quartic threefolds}\label{S:igusa}

\setcounter{footnote}{1}
The \emph{Igusa quartic} or \emph{Castelnuovo--Richmond quartic}\footnote{See \cite{Dolgachev2012classical}*{p.~545} for a discussion on the naming.} is a quartic threefold in $\PP^5$ defined by
\[
\sI_4\colon\sum_{j=0}^5 x_j = 0, \qquad \left(\sum_{j=0}^5 x_j^2\right)^2 - 4\sum_{j=0}^5 x_j^4 = 0.
\]
Note that the first equation is linear, so $\sI_4$ is also a quartic hypersurface in $\PP^4$.

The singular locus of $\sI_4$ consists of $15$ lines corresponding to the different ways of splitting a set of six into three pairs, called \emph{synthemes}. These form an orbit under the permutation action of $S_6$ on the coordinates. One representative is cut out on $\sI_4$ by
\[x_0-x_1=x_2-x_3=x_4-x_5=0.\]
Another special locus, which we denote by $\sE$, consists of the orbit under $S_6$ of the hyperplane section $x_0+x_1+x_2=0$. Note that $x_3+x_4+x_5=0$ cuts out the same hyperplane section, so there are $10$ components to $\sE$.

The open part $\sI_4\setminus \sE$ is isomorphic to the moduli space $\sM_2(\Delta)$ of genus $2$ curves with full $2$-level structure $\Delta=(\ZZ/2\ZZ)^4$ on their Jacobians. 
This moduli interpretation is surprisingly concrete and, as we will describe below, applies to any quartic threefold $\sI$ that is isomorphic to $\sI_4$ over the algebraic closure of the base field $k$.
For a point $a\in \sI\setminus \sE$, the intersection $T_a\sI\cap \sI$ yields a quartic surface with $16$ singularities: $15$ from the intersection with the singular locus of $\sI$ and one distinguished singularity from the point of tangency $a$.
This is a Kummer quartic surface $\sK_a$. Indeed, it is no surprise we recover only $\sK_a$ and not a genus $2$ curve or its Jacobian directly: $2$-level structure is invariant under quadratic twists.

The space $\sM_2(\Delta)$ has automorphism group $\Sp_4(\FF_2)\simeq S_6$. Indeed, $\sI$ has an action of $S_6$ via permutation on the coordinates $x_0,\ldots,x_5$.

Suppose we have a curve $C_a$ such that $\sK_a=\Jac(C_a)/\langle-1\rangle$. Then $\Sp_4(\FF_2)$ acts through $S_6$ on the Weierstrass points of $C_a$ as well. The singularities on $\sK_a$ are the images of $\Jac(C_a)[2]$, so the $15$ singularities can be labelled by pairs of Weierstrass points of $C_a$.

\begin{remark}\label{R:outer_aut}
It follows that the permutation action of $\Sp_4(\FF_2)$ action on the coordinates, which corresponds to labelling the singularities with synthemes, is \emph{not} conjugate to the permutation action of $\Sp_4(\FF_2)$ on the Weierstrass points of $C_a$. The outer automorphisms of $S_6$ exchange the two representations.
\end{remark}

\begin{remark}
For nonsingular points in $\sE$ that lie in the component $x_0+x_1+x_2=0$ the tangent space is actually just $x_0+x_1+x_2=0$, which intersects $\sI$ in a double-counting quadric. These points correspond to abelian surfaces that are products of elliptic curves; see \cite{vanderGeer1982Siegel}.
\end{remark}

The possible $2$-level structures $\Delta$ for principally polarized abelian surfaces over $k$ are parameterized by the Galois cohomology set $\H^1(k,\Sp_4(\FF_2))$. Since the automorphism group of $\sI_4$ comes from a faithful representation $\Sp_4(\FF_2)\to \GL_6(k)$, we have that the corresponding twist $\sI^\Delta$ also admits a model inside a hyperplane in $\PP^5$, with again a singular locus consisting of $15$ lines. It follows that the moduli interpretation of $\sI^\Delta$ can be obtained in exactly the same way.

\begin{definition} Let $\sI\subset\PP^4$ be a quartic threefold over $k$ that is isomorphic to $\sI_4$ over $\ksep$. We say that $\sI$ is \emph{an Igusa quartic}. We write $\sE\subset \sI$ for the locus that gets mapped to the locus $\sE\subset\sI_4$ under an isomorphism. This is the locus where the tangent plane fails to cut out a normal quartic surface.
\end{definition}

The projective dual of $\sI_4$ is the Segre cubic threefold (see \cite{Dolgachev2012classical}*{Section~9.4.4}) which in $\PP^5$ can be described as
\[\sS_3\colon \sum_{i=0}^5 y_i =\sum_{i=0}^5 y_i^3=0.\]
Over an algebraically closed base field it can be characterized as the unique cubic threefold in $\PP^4$ with $10$ nodal singularities. For each twist $\sI^\Delta$ the projective dual is again a Segre cubic $\sS^\Delta$.

\section{Galois-theoretic characterization of curves with $2$-decomposable Jacobian}

\begin{lemma}\label{L:torelli_aut}
  Let $k$ be a field of characteristic different from $2$. Suppose $A_1,A_2$ are principally polarized abelian varieties of dimension $g$ and that $A_1[2]\simeq A_2[2]$ as $\Sp_{2g}(\FF_2)$-modules, with $\Delta\subset A_1\times A_2$ as graph. Suppose that $(A_1\times A_2)/\Delta\simeq \Jac(X)$ for a genus $2g$ curve $X$, as polarized abelian varieties.
  Then
  \begin{enumerate}
  \item[(a)] If $X$ is hyperelliptic then $X$ has two involutions $\tau_1,\tau_2$ and $\Jac(X/\tau_1)\simeq A_1$ and $\Jac(X/\tau_2)\simeq A_2$.
  \item[(b)] If $X$ is not hyperelliptic, then $X$ has an involution $\tau$ and
    $\Jac(X/\tau) \simeq A_1$ (up to relabelling).
  \end{enumerate}
  In either case, we have that $\Prym(X, X/\tau) \simeq A_2$.
\end{lemma}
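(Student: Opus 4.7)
The plan is to use the Torelli theorem to descend to $X$ two polarization-preserving involutions of $\Jac(X)$ coming from the sign-flip symmetries of $A_1\times A_2$. First I would introduce, for $i=1,2$, the involution $\sigma_i\colon A_1\times A_2\to A_1\times A_2$ acting as $-1$ on $A_i$ and as $+1$ on $A_{3-i}$. Both preserve the product principal polarization $\Theta_{A_1}\boxplus\Theta_{A_2}$, and since $\Delta\subset(A_1\times A_2)[2]$ each $\sigma_i$ fixes $\Delta$ pointwise. Hence each $\sigma_i$ descends to a polarization-preserving involution $\tilde\sigma_i$ of $(\Jac(X),\Theta_X)$ satisfying $\tilde\sigma_1\tilde\sigma_2=-\mathrm{id}$. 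Because $\Delta\cap(A_i\times\{0\})=0$, each factor $A_i$ embeds in $\Jac(X)$, and the connected component of the fixed locus of $\tilde\sigma_i$ is the image of $A_{3-i}$, carrying its inherited principal polarization.

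Next, by the Torelli theorem, every polarization-preserving automorphism of $\Jac(X)$ has the form $\epsilon\alpha^*$ with $\epsilon\in\{\pm 1\}$ and $\alpha\in\Aut(X)$. Writing $\tilde\sigma_i=\epsilon_i\alpha_i^*$, the identity $\alpha_i^{*2}=\mathrm{id}$ forces $\alpha_i^2=\mathrm{id}$: the only possible alternative in the hyperelliptic case, $\alpha_i^2=\iota$, would give $\alpha_i^{*2}=\iota^*=-\mathrm{id}$, a contradiction. I would also exclude $\alpha_i\in\{\mathrm{id},\iota\}$, using that $\tilde\sigma_i$ fixes a proper, positive-dimensional subvariety.

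In case (a), where $X$ is hyperelliptic, the relation $\iota^*=-\mathrm{id}$ lets me absorb the sign and write $\tilde\sigma_i=\tau_i^*$ for involutions $\tau_1,\tau_2$; the relation $\tilde\sigma_1\tilde\sigma_2=\iota^*$ then forces $\tau_1\tau_2=\iota$. Since the fixed part of $\tau_i^*$ is $\Jac(X/\tau_i)$, comparing with the fixed-part computation gives $\Jac(X/\tau_i)\simeq A_{3-i}$. Moreover $\Prym(X,X/\tau_i)$ is the $(-1)$-eigenspace of $\tau_i^*$, which equals the $(+1)$-eigenspace of $(\iota\tau_i)^*=\tau_{3-i}^*$ and is therefore $\Jac(X/\tau_{3-i})\simeq A_i$. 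Relabelling $\tau_1\leftrightarrow\tau_2$ yields the statement. In case (b), where $X$ is non-hyperelliptic, no nontrivial automorphism of $X$ acts as $-\mathrm{id}$ on $\Jac(X)$, so $\epsilon_1\epsilon_2(\alpha_2\alpha_1)^*=-\mathrm{id}$ forces $\alpha_1=\alpha_2=:\tau$ and $\epsilon_1\epsilon_2=-1$. After relabelling $A_1\leftrightarrow A_2$ if needed, the $(+1)$-eigenspace of $\tau^*$, which is $\Jac(X/\tau)$, is identified with $A_1$, and its $(-1)$-eigenspace, $\Prym(X,X/\tau)$, with $A_2$.

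The main obstacle is the careful tracking of signs in Torelli, which is precisely the mechanism that distinguishes the two cases: in the hyperelliptic case the sign ambiguity is absorbed by $\iota$, producing two genuinely distinct involutions of $X$, whereas in the non-hyperelliptic case the two $\tilde\sigma_i$ must come from a single involution $\tau$ appearing with opposite signs.
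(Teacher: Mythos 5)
Your proof is correct and follows essentially the same route as the paper: descend the two sign-flip involutions of $A_1\times A_2$ to $\Jac(X)$, apply the Torelli theorem to realize them (one or both, according as $X$ is non-hyperelliptic or hyperelliptic) as $\pm\tau^*$ for involutions $\tau$ of $X$, and identify the fixed and anti-fixed parts with $\pi^*\Jac(X/\tau)$ and $\Prym(X,X/\tau)$. The only point you elide is that the fixed part of $\tau^*$ is $\pi^*\Jac(X/\tau)$ rather than $\Jac(X/\tau)$ itself; the paper closes this by noting that the dimension count forces $\pi$ to be ramified, whence $\pi^*$ is injective.
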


\begin{proof}
First note that $A_1\times A_2$ has the involutions $[1]_{A_1}\times [-1]_{A_2}$ and $[-1]_{A_1}\times [1]_{A_2}$, which respect the polarization and restrict to the identity on $\Delta$, so these induce involutions on $\Jac(X)$ as well. The Torelli theorem (see for instance \cite{MilneJacobians1986}*{Theorem~12.1}) says that when $X$ is  hyperelliptic, both of these induce involutions on $X$, and on non-hyperelliptic $X$, exactly one of these induces an involution on $X$.

We check that if $[1]_{A_1}\times [-1]_{A_2}$ induces an involution $\tau$ on $X$, then $\Jac(X/\tau)\simeq A_1$.
Let $C := X/\tau$, and let $\pi\colon X \rightarrow C$ denote the quotient morphism. It is well-known
that
$\pi^*\pi_* = (1 + \tau) \in \End(\Jac(X))$.
From this and surjectivity of $\pi_*$ we have
\[
    \left(\Jac(X)^{(\tau=1)}\right)^\circ = \pi^*\Jac(C)
\]  
where ${}^\circ$ denotes taking the connected component of the identity. Let $\psi\colon A_1 \times A_2 \rightarrow \Jac(X)$ denote the isogeny with kernel $\Delta$, and notice that the intersection $\Delta \cap (A_1 \times \{0\})$
is trivial. In particular, the restriction $\psi\colon A_1 \times \{0\} \rightarrow \Jac(X)$ is
an injection whose image is both connected and fixed by $\tau$. Thus, $\dim \Jac(C) = \dim(A_1) = g$,
so it follows by Riemann-Hurwitz that $\pi$ is ramified (at two points). In particular, $\pi^*$ is
injective, so $A_1 \simeq \Jac(X/\tau)$.

Let us now prove that $\Prym(X, X/\tau) \simeq A_2$. The restriction $\psi\colon \{0\} \times A_2 \to \Jac(X)$ is an injection and $\tau$
acts by $[-1]$ on the image. In particular, it is contained in $\ker (1 + \tau)^\circ = \Prym(X, X/\tau)$. Comparing dimensions gives the result.
\end{proof}

\begin{remark}
  The converse also holds. Namely,
  if $X$ admits an involution $\tau$ with two fixed points,
  then $A_2 := \Prym(X, X/\tau)$
  is principally polarized, and putting $A_1 := \Jac(X/\tau)$, there are
  isomorphisms $A_1[2] \simeq A_2[2]$ and $(A_1 \times A_2)/\Delta \simeq \Jac(X)$.
  This result is due to Mumford~\cite{Mumford1974prym}.
\end{remark}

\subsection{Proof of Theorem~\ref{T:galchar}}\label{S:proof_Theorem_1}
The assumptions match those of Lemma~\ref{L:torelli_aut}, which gives us that $J_1=\Jac(C_1)$ and $J_2=\Prym(X,C_1)$. It remains to establish the additional characterizations of $J_2$ in case $C_1$ is hyperelliptic.

We start with case~(\ref{T:galchar:hyperelliptic}). Suppose that $X$ is hyperelliptic, say with hyperelliptic involution $\iota$. Then $X$ has three involutions, say $\tau, \tau\iota, \iota$, with $C_1=X/\tau$ and $C_2=X/(\tau\iota)$ of genus $g$, the hyperelliptic quotient $X/\iota$, and the degree four cover $X\to X/\langle \tau,\iota\rangle$ that factors through each of the other three. This yields the diamond diagram~\ref{fig:V4}. Furthermore, since $X$ is a hyperelliptic curve of even genus, we have that $X/\iota\simeq \PP^1$, and therefore $X/\langle \tau,\iota\rangle\simeq\PP^1$ as well. The maps $C_1\to\PP^1$ and $C_2\to \PP^1$ give an identification of $2g+1$ of the $2g+2$ Weierstrass points of each of $C_1,C_2$. This induces an isomorphism $\lambda\colon \Jac(C_1)[2]\to\Jac(C_2)[2]$. Writing $\phi\colon X\to C_1$ and $\psi\colon X\to C_2$, it remains to check that the sums of the pull-back maps $\phi^*+\psi^*\colon \Jac(C_1)\times \Jac(C_2)\to \Jac(X)$ is an isogeny, with kernel equal to the graph of $\lambda$. This check is straightforward.

For cases (\ref{T:galchar:generic}) and (\ref{T:galchar:V4cover})
we consider $X$ not hyperelliptic. We write $\tau$ for the involution of $X$ over $C_1$. Let $C_1\to L_1$ be the hyperelliptic cover.  We consider the tower of double covers $X\to C_1\to L_1$. From Riemann-Hurwitz we see that the ramification locus of $X$ over $C_1$ is of degree $2$.

We classify the possibilities for the geometric Galois group of the quartic cover $X\to L_1$: it is one of $\grpV_4$, $\grpC_4$, or $\grpD_4$, where the last one occurs exactly when $X\to L_1$ is not Galois itself. We will see that $\grpV_4$ corresponds to case~(\ref{T:galchar:V4cover}) and $\grpD_4$ corresponds to case~(\ref{T:galchar:generic}), while $\grpC_4$ cannot actually occur.

First suppose that $\Aut_{\ksep}(X,L_1)=\grpV_4$. Then, possibly over a quadratic base field extension, $X\to L_1$ admits three subcovers, as pictured in Diagram~\ref{fig:V4nh}. Since $X$ is assumed to be non-hyperelliptic, the genera of all three curves $C_1,C_2,C_3$ are positive.
It is straightforward to check that $\Prym(X,C_1)\simeq \Jac(C_2)\times \Jac(C_3)$. It remains to verify that $L_1\simeq \PP^1$. We have that $g(C_1)=g(C_2)+g(C_3)$, so at least one genus is even. If $C_2,C_3$ are both defined over $k$ then this implies $L_1$ is covered by an even genus hyperelliptic curve and hence $L_1\simeq \PP^1$. If $C_2,C_3$ are not defined over $k$, then they are quadratic conjugate and therefore $g(C_2)=g(C_3)$. It follows that $g(C_1)$ is even, so we obtain the same result.

Now suppose, for the purpose of contradiction, that $\Aut_{\ksep}(X,L_1)=\grpC_4$. In that case, $C_1\to L_1$ is the unique quadratic subcover. Let us write $R\in\Div(C_1)$ for the branch locus of $X$ over $C$. By Riemann-Hurwitz, $R$ is an effective, separated, degree $2$ divisor on $C_1$. Let $\iota$ be the hyperelliptic involution on $C_1$. We must have $\iota R=R$.

If $R$ is simply the pull-back of a degree $1$ divisor on $L_1$, we would have $\Aut_{\ksep}(X,L_1)\simeq \grpV_4$, so $R$ is the sum of two distinct branch points of $C_1/L_1$. In this case, $R$ is not equivalent to the fibre class of $C_1 \rightarrow \PP^1$ as $g(C_1) \geq 1$. 
We have that $X$ is obtained by adjoining the square root of a function $f$ with divisor $\div(f)=R-2\alpha$, where $\alpha$ is a divisor of degree $1$, but not necessarily an effective one. Under our assumptions, adjoining the square roots of $f$ or $\iota(f)$ yield the same result, so $\iota(f)f$ is a square. Its divisor is $2R-2(\alpha + \iota \alpha)$, so the question is whether $R-(\alpha+\iota \alpha)$ is principal. However, note that $\alpha+\iota \alpha$
lies in the fibre class, so $R - (\alpha+\iota \alpha)$ is not a principal divisor. Hence we see that a cyclic order $4$ Galois group cannot occur.
  
We are left with the remaining situation where $X\to L_1$ is not Galois. Since it is a tower of quadratic extensions, this means its Galois closure $X!$ is of degree $8$ with dihedral Galois group. We name the curves and maps as follows.
\[
  \begin{tikzcd}
    & X! \arrow[dl,"\phi" ' ] \arrow[dr,"\psi"] \\
    X \arrow[d,"\pi" ' ] && C_2 \arrow[d,"z"] \\
    C_1\arrow[dr,"x_1" ' ] && Q \arrow[dl,"x_2"] \\
    & L_1
  \end{tikzcd}
\]
Under the Galois correspondence, $X$ and $C_2$ correspond to the conjugacy classes of non-normal order $2$ subgroups. Tracking ramification yields that $g(C_2)=g(C_1)$ and that $Q,L_1$ are of genus $0$. Since $L_1$ has a genus $0$ double cover, we must have $L_1\simeq\PP^1$.

In order to check that $\Jac(X)$ indeed decomposes as expected, we observe that the covers $x_1\colon C_1\to L$ and $x_1\circ z\colon C_2\to L$ give an identification between the Weierstrass points of $C_1$ and $C_2$ and therefore induce an isomorphism $\lambda\colon \Jac(C_1)[2]\to \Jac(C_2)[2]$.

Through pull-back and push-forward of divisor classes we obtain a morphism
\[
  \pi^* + \phi_*\psi^*\: \Jac(C_1) \times \Jac(C_2) \rightarrow \Jac(X).
\]
We claim this is a polarized isogeny whose kernel is the graph of $\lambda$ and whose dual, composed with self-duality of domain and codomain, is
\[
  (\pi_*, \psi_*\phi^*)\: \Jac(X) \rightarrow \Jac(C_1) \times \Jac(C_2).
\]
First we check that the composition $\mu=(\pi_*, \psi_*\phi^*)\circ(\pi^* + \phi_*\psi^*)$ is multiplication-by-two. Indeed, it is straightforward to check that $\pi_*\phi_*\psi^*=x_1^*(x_2)_*z_*$. That map factors through $\Pic^0(Q)=\Pic^0(L)=0$, so it is constant $0$. Similarly, we have $\psi_*\phi^*\pi^*=z^*x_2^*(x_1)_*=0$. Hence, we see $\mu=(\pi_*\pi^*,\psi_*\phi^*\phi_*\psi^*)=(2,2)$.

We now determine the kernel of $\pi^* + \phi_*\psi^*$. Note that $\ker(\pi^* + \phi_*\psi^*) \subseteq \Jac(C_1)[2] \times \Jac(C_2)[2]$. 
It is easy to see that if $P$ is a Weierstrass point of $C_2$, then $\phi_*\psi^*(P)$ is the fibre over the corresponding Weierstrass point
of $C_1$ (considered as a divisor of degree $2$). In particular, $\phi_*\psi^*(\Jac(C_2)[2]) = \pi^*(\Jac(C_1)[2])$, and the kernel is indeed
the graph of $\lambda$. This completes the proof of Theorem~\ref{T:galchar}.

\begin{remark}
Geometrically, the data in Figure~\ref{fig:V4} is determined by a degree $2g+1$ locus on $\PP^1$ together with two points $a,b\in\PP^1$. Then $C_1,C_2$ are the double covers of $\PP^1$ ramified over $B\cup\{a\}$ and $B\cup\{b\}$ respectively, and $Q$ is the double over of $\PP^1$ ramified over $\{a,b\}$.

If we choose a coordinate on $\PP^1$ such that $a=\infty$, we get models
 \[\begin{aligned}
 	C_1\colon& y_1^2=d_1\,f(x),\\
 	C_2\colon& y_2^2=d_2\,(x-b)\,f(x),\\
 	Q\colon & y_0^2=d_1d_2\,(x-b),\\
 	X\colon & y_1^2=d_1\, f\Big(\frac{1}{d_1d_2}y_0^2+b\Big)
\end{aligned}\]
where $f$ is a square-free polynomial of degree $2g+1$ and $b\in k$ with $f(b)\neq 0$. The factors $d_1,d_2\in k^\times$ represent classes in $k^\times/k^{\times2}$.
\end{remark}

\begin{remark}
The data in Figure~\ref{fig:D4} is determined by a genus $g$ hyperelliptic curve $C_1$ together with a degree $1$ divisor $\alpha$ on $C_1$ such that $2\alpha$ admits an effective representative. If $2\alpha$ is hyperelliptic we are back to Figure~\ref{fig:V4}, where $X$ is hyperelliptic.

Alternatively, we can also specify a hyperelliptic curve $C_2$ as a double cover of a genus~$0$ curve $Q$, together with a double cover $Q\to\PP^1$ with such that the image of the branch locus of $C_2\to Q$ remains separated.
This latter description shows that the relevant moduli space is actually rational.
\end{remark}

\begin{remark}
By tracing ramification in Figure~\ref{fig:D4} we find that in the general case, the genera of $Y,Z,X!$ are $g+1,2g+1,4g+1$ respectively. These genera drop with overlap between the branch locus of $C_2\to Q$ and the ramification locus of $Q\to\PP^1$.
Equivalently, this corresponds with an overlap between the branch locus of $X\to C_1$ and the Weierstrass points of $C_1$.

Let us take $Q\simeq \PP^1$ with affine coordinate $x$ and double cover to $\PP^1$ given by $x=u^2$. For $C_2\colon v^2=h(u)$ and $f(x)=\res_u(h(u),u^2-x)$ we obtain a model $C_1\colon y^2=f(x)$.

If $f$ is of degree $2g+2$ and $f(0)\neq 0$, then we find that $Y\colon w^2=xf(x)$ is of genus $g+1$. If $C_2\to\PP^1$ is ramified over $x=\infty$ then $f$ is of degree $2g+1$; in this case, $Y$ admits a model of the same form, but now has genus $g$.

If $f(x)=x \tilde{f}(x)$ then we find $Y\colon w^2 =\tilde{f}(x)$ has genus $g-1$.
Note that in the latter case, the degree $2$ effective divisor on $C_2$ supported at $x=0,\infty$ is twice the degree $1$ divisor $\alpha$. This means that $\Jac(C_1)$ has a rational $4$-torsion point.
\end{remark}

\section{The Prym locus on Igusa quartics} \label{S:synthetic}

Let $\sI$ be an Igusa quartic. For a point $a\in \sI\setminus\sE$, the moduli interpretation, in the form of a Kummer surface, can be obtained by $\sK_a=T_a\sI\cap\sI$. As is observed in \cite{vanderGeer1982Siegel}, the fact that $\sK_a$ occurs as a subvariety of $\sI$ suggests that $\sK_a\subset \sI$ has a moduli interpretation as well. This is indeed  the case: for a point $b\in \sK_a\subset \sI$, we have that $\Jac(C_b)$ is the Prym variety of a double cover $X_{a,b}$ of $C_a$, branched over the degree-$2$ divisor marked by the point $b$ on $\sK_a$. In \cite{vanderGeer1982Siegel}, this is proved using a theta function identity, verified in \cite{Fay1973theta}.

The relevance for us is that a pair of points $a,b\in \sI$ determines a pair of abelian surfaces $\Jac(C_a),\Jac(C_b)$ with compatible $2$-level structure and hence an isomorphism between the $2$-torsion subgroups with graph, say $\Delta$. Hence, there is a $(2,2)$-decomposable abelian fourfold $(\Jac(C_a)\times\Jac(C_b))/\Delta$.
By Theorem~\ref{T:galchar}, this is exactly a Jacobian if one is a Prym variety for a cover of the other curve. In the notation of Section~\ref{S:polar}, this holds if $\P(a,b)=0$.

We give a synthetic construction for $X_{a,b}$ given such $a,b$. This yields the result directly in arbitrary characteristics other than $2$. It also provides insight in possible obstructions to realizing this moduli interpretation over algebraically non-closed base fields.

Let $a,b\in \sI$ with $\P(a,b)=0$. Then $b$ lies on the tangent space of $\sI$ at $a$ and hence lies on $\sK_a=\sI\cap T_a\sI$. The tangent plane to $\sK_a$ at $b$ yields a point on the projective dual $\sK_a^\vee$, which by Corollary~\ref{cor:pic1-kummer_dual} represents $\Pic^1(C_a)/\langle \iota\rangle$, where $\iota$ is the involution on divisor classes induced by the hyperelliptic involution on $C_a$. By adjusting the quadratic twist of $C_a$ that we consider, we can make sure that $b$ lifts to a pair of degree $1$ divisor classes on $C_a$, say $\fd$ and $\iota(\fd)$.

By Proposition~\ref{P:description-gauss-map}, the images of the Abel-Jacobi maps $j_\fd(C_a),j_{\iota\fd}(C_a)\subset \Jac(C_a)$ map to the tangent plane section $D_a=T_b\sK_a\cap\sK_a$, which provides a singular quartic plane model of $C_a$.
Using that both $\sK_a$ and $\sK_b$ occur as hyperplane sections of $\sI$, we see that
\[D_a=T_a\sI\cap T_b\sI\cap \sI=\sK_a\cap\sK_b\]
occurs as a section of $\sK_b$, by a plane passing through $b$. This configuration allows us to explicitly construct a diagram as in Figure~\ref{fig:D4} and Theorem~\ref{T:galchar}.
In what follows we use the notation from Section~\ref{S:polar}.

\begin{proposition}\label{P:kummer-polar}
  Let $\sK_b\subset \PP^3$ be a quartic Kummer surface with distinguished node $b$. Denote the quartic form defining $\sK_b$ by $G$.
  \begin{enumerate}
  \item $\bp_{b,G}^{(2)}=0$  describes the tangent cone of $\sK_b$ at $b$.
  \item The locus $\bp_{b,G}^{(1)}=0$ intersected with $\sK_b$ consists of the six conics cut out by the six tropes on $\sK_b$ passing through $b$.
  \item The locus $\bp_{b,G}^{(1)}=\bp_{b,G}^{(2)}=0$ consists of six lines: each line is the locus where a relevant trope through $b$ is tangent to the cone.
  \item Under the projection from $b$, the image of $\bp_{b,G}^{(2)}$ is a plane conic $Q_b$ and the image of
    $\bp_{b,G}^{(1)}=\bp_{b,G}^{(2)}=0$ is a degree $6$ locus $B$ such that, for any curve $C_b$ with
    $\sK_b\simeq \Jac(C_b)/\langle-1\rangle$, we have that $C_b$ is a double cover of $Q_b$ branched at $B$.
    (The curve $C_b$ is unique up to quadratic twists.)
  \end{enumerate}
\end{proposition}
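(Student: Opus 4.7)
The plan is to place the node at $b=(0{:}0{:}0{:}1)$ via a linear change of coordinates; then $\bp_b=\partial_{x_3}$, and the node conditions $G(b)=\nabla G(b)=0$ force the defining quartic to split as
\[G=\alpha(x_0,x_1,x_2)\,x_3^2+\beta(x_0,x_1,x_2)\,x_3+\gamma(x_0,x_1,x_2),\]
with $\alpha,\beta,\gamma$ homogeneous of degrees $2,3,4$. Then $\bp_bG=2\alpha x_3+\beta$ and $\bp_b^{(2)}G=2\alpha$, so Part~(1) is immediate: the tangent cone at a node is cut out by the Hessian quadric, and this is $\alpha=0$. For Part~(2), fix a trope $\Pi$ through $b$ with $\sK_b\cap\Pi=2Q$; then $\Pi=T_a\sK_b$ for every $a\in Q$, and $b\in\Pi$ reads algebraically as $\bp_bG(a)=b\cdot\nabla G(a)=0$, so $Q\subset\{\bp_bG=0\}$. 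Summing over the six tropes gives a subscheme of the proper intersection $\{\bp_bG=0\}\cap\sK_b$ of total degree $12=3\cdot 4$, matching Bezout; since both sides are pure one-dimensional of the same degree, equality of schemes follows.

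For Part~(3), the joint locus reduces to $\{\alpha=\beta=0\}$, the cone with vertex $b$ over the six points $\{\alpha=0\}\cap\{\beta=0\}\subset\PP^2$ cut out by Bezout---hence a union of six lines through $b$. To match these with the tangent lines $L$ at $b$ of the six conics $Q$ from Part~(2), I would parametrise $L$ as $s\mapsto(sv_0{:}sv_1{:}sv_2{:}1)$ with direction $v$ satisfying $\alpha(v)=0$. The trope relation $\sK_b\cap\Pi=2Q$ together with $L$ being tangent to $Q$ at $b$ inside $\Pi$ forces $L\cdot\sK_b=4b$, and substituting the parametrisation into $G$ gives
\[G|_L(s)=s^2\alpha(v)+s^3\beta(v)+s^4\gamma(v);\]
vanishing of order four at $s=0$ forces $\beta(v)=0$ in addition to $\alpha(v)=0$, placing $L$ among the six Bezout lines. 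Distinct tropes through $b$ determine distinct rulings of the tangent cone, so the six tangent lines exhaust the locus.

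For Part~(4), projection from $b$ to $\{x_3=0\}=\PP^2$ carries the tangent cone onto the conic $Q_b=\{\alpha=0\}$ and collapses each line through $b$ to its direction; thus $B=\{\alpha=\beta=0\}\cap Q_b$ is a length-six subscheme of $Q_b$, and each trope $\Pi_i$ projects to a line $L_i'$ tangent to $Q_b$ at $v_i\in B$. To identify the double cover of $(Q_b,B)$ with $C_b$, I would invoke Proposition~\ref{P:description-gauss-map}: each trope $\Pi_i$ through $b$ arises as the plane of an Abel--Jacobi image $\bar{\jmath}_{\bar{\fd}_i}(C_b)=2Q_i$ with $2\fd_i=\kappa$, so $\fd_i=[W_i]$ for one of the six Weierstrass points, and the induced hyperelliptic map $C_b\to Q_i$ sends $W_i$ to $b$. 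Composing with the birational projection $Q_i\to L_i'$ from $b$ then sends $W_i$ to $v_i$. Varying $i$, these six identifications assemble into an isomorphism of the hyperelliptic base of $C_b$ with $Q_b$ matching the Weierstrass set with $B$, exhibiting $C_b$ as the double cover of $Q_b$ branched at $B$; the square-root ambiguity in defining this double cover accounts for the quadratic twist.

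The main obstacle will be Part~(4): the polar calculations in (1)--(3) are essentially formal once the coordinates are adapted to the node, but the final identification requires Proposition~\ref{P:description-gauss-map} to realise the six tropes as planes of half-canonical Abel--Jacobi images and to track the Weierstrass fibres through the projection from $b$.
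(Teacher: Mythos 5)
Parts (1)--(3) of your argument are correct and, apart from your more explicit normalization $G=\alpha x_3^2+\beta x_3+\gamma$, they are the same degree counts the paper uses: each trope through $b$ is the tangent plane at every point of its contact conic, so the conic lies in the first polar and $6\cdot 2=3\cdot 4$ closes part (2); your order-of-vanishing computation $G|_L=\alpha(v)s^2+\beta(v)s^3+\gamma(v)s^4$ is a clean substitute for the paper's observation that each trope is tangent to the tangent cone along the tangent line at $b$ of its conic.

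The gap is the final assembly in part (4). For each $i$ you produce a degree-$2$ map $C_b\to Q_i\to L_i'$, which identifies the hyperelliptic base $C_b/\langle\iota\rangle$ with the \emph{line} $L_i'$, and under that identification only the single Weierstrass point $W_i$ is matched with a point of $B$, namely $v_i=L_i'\cap Q_b$; the other five Weierstrass points land at the projections from $b$ of the remaining nodes on $Q_i$, which are points of $L_i'$ with no a priori relation to $B$. So you have six isomorphisms of $C_b/\langle\iota\rangle$ with six different tangent lines of $Q_b$, each matching one marked point, and the assertion that these ``assemble into'' a single isomorphism of $(C_b/\langle\iota\rangle,\{W_1,\dots,W_6\})$ with $(Q_b,B)$ is precisely the content of part (4); it does not follow from what you have exhibited. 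The paper closes this by projective duality: projection from the node $b$ corresponds on $\sK_b^\vee$ to the trope dual to $b$, whose contact conic is the image of $C_b\subset\Pic^1(C_b)$ by Corollary~\ref{cor:pic1-kummer_dual}, i.e.\ it \emph{is} $C_b/\langle\iota\rangle$ with the six nodes on it marking the Weierstrass points; since the planes through $b$ tangent to the tangent cone correspond to the tangent lines of $Q_b$, this conic is canonically the dual conic of $Q_b$, and the standard identification $p\mapsto T_pQ_b$ carries $B$ to those six nodes. You need this duality step (or an equivalent mechanism, e.g.\ a cross-ratio argument) to finish.
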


\begin{proof}
  \begin{enumerate}
  \item: This is immediate from the definition of the second polar.
  \item: A quartic Kummer surface has $16$ nodes and $16$ tropes. Each trope is a plane that intersects $\sK_b$ in a double-counting conic. It passes through six nodes, and conversely, every node lies on six tropes. A trope is a tangent plane to $\sK_b$ at any point of intersection with $\sK_b$.
    
    It follows that a conic on $\sK_b$ cut out by a trope passing through $b$ must lie in $\bp_{b,G}^{(1)}\cap \sK_b$. The intersection is of degree $4\cdot 3$, so six conics make up the entire locus.
  \item: Each trope $T$ passing through $b$ is tangent to the cone at a ray through $b$.
    That ray is the tangent line at $b$ to the conic $T\cap \sK_b$, so it is part of the indicated locus. Degree considerations show there are no other components.
  \item: The projective dual $\sK_b^\vee$ is again a quartic Kummer surface. Under duality, tropes correspond to nodes on the dual.
    Thus the trope of $\sK_b^\vee$ dual to the node $b$ passes through the six nodes dual to the tropes passing through $b$. Under duality, this corresponds to $Q_b$ together with a degree $6$ locus $B$.
    
    Over an algebraically closed field, we have that $\sK_b=\Jac(C_b)/\langle -1\rangle$ for some curve $C_b$ of genus $2$. For the dual we have $\sK_b^\vee=\Pic^1(C_b)/\langle \iota\rangle$, where $\iota$ is the morphism that hyperelliptic involution induces on divisor classes, and $Q_b\subset \sK_b$ is the image of the embedding $C_b\subset \Pic^1(C_b)$, where the nodes are the images of the Weierstrass points, i.e., the branch points of the hyperelliptic cover $C_b\to Q_b$. This proves the statement. \qedhere
  \end{enumerate}	
\end{proof}

\begin{remark}
Proposition~\ref{P:kummer-polar}(3) yields a very direct way of realizing the moduli interpretation in the form of a degree $6$ locus on a curve of genus $0$: If $\sI$ is defined by $F=0$ and $b$ is a point on $\sI$ then $\bp^{(1)}_{b,F}=\bp^{(2)}_{b,F}=\bp^{(3)}_{b,F}=0$ projected away from $b$ yields the intersection of a plane cubic and conic.
\end{remark}

\begin{remark}
From this construction one can also see in a fairly elementary way that the isomorphism class of the locus $B$ only depends on the twist $\sI$: the fifteen singularities on the tangent planes arise from the 15 lines that make up the singular locus of $\sI$. The six tropes through the distinguished node correspond to the odd theta characteristics of $C_b$. We know that the divisor classes of order $2$ on $C_b$ can be expressed uniquely as a difference of two distinct odd theta characteristics. Hence we can label each odd theta characteristic with the cardinality $5$ set of two-torsion classes that have the given characteristic in their representation. But that means we can label them with $5$-tuples of components of the singular locus of $\sI$, which is discrete data. Since $\sI\setminus \sE$ is connected we see that this labelling must be constant.
\end{remark}

\begin{proposition}\label{P:polar_construction}
  Suppose $\P(a,b)=0$ and $\P(b,a)\neq 0$, and that $a,b\notin \sE$.
  \begin{enumerate}
  \item The curve model $D_a=T_a(\sI)\cap T_b(\sI)\cap \sI$ is a quartic plane section of $\sK_a$ with a singularity at $b$. We have that $\sK_a\simeq \Jac(D_a)/\langle -1\rangle$
  \item Projection from the point $b$ yields a double cover $D_a\dashrightarrow L_a\simeq\PP^1$ defined outside of~$b$. The ramification points of this cover lie in the locus defined by $\bp^{(1)}_{b}$, i.e., they are the points of intersection of $D_a$ with the tropes of $\sK_b$ that pass through $b$.
  \item If $b$ admits a genus $2$ curve $C_b$, then $C_a\to\PP^1$ and $C_b\to Q_b\to\PP^1$ fit in Diagram~\ref{fig:D4}, so there is a double cover $X\to C_a$ such that $\Prym(X,C_a)=\Jac(C_b)$.
  \end{enumerate}
\end{proposition}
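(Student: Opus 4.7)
For (1), I would argue that $\P(a,b)=0$ places $b$ on $\sK_a=T_a\sI\cap\sI$, while $\P(b,a)\ne 0$ forces $a\ne b$ (otherwise Euler would give $\P(a,a)=4F(a)=0$). Combined with $a,b\notin\sE$, this ensures $b$ avoids all $16$ nodes of $\sK_a$ (the point $a$ itself and the $15$ intersections of $T_a\sI$ with the singular locus of $\sI$), so $b$ is a nonsingular $k$-point of $\sK_a$. The tangent plane to $\sK_a$ at $b$ is the ambient intersection $T_a\sI\cap T_b\sI$, so $D_a$ is exactly the tangent plane section of $\sK_a$ at the smooth point $b$, a plane quartic with a double point at $b$. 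Corollary~\ref{C:fod_obstruction} applied to $\sK_a$ with its nonsingular $k$-point $b$ yields a genus $2$ curve $C$ over $k$ with $\sK_a\simeq\sK_C$; since Proposition~\ref{P:description-gauss-map} identifies the tangent plane section $D_a$ with the Abel-Jacobi image of $C$, whose normalization is $C$, we conclude $\sK_a\simeq\Jac(D_a)/\langle -1\rangle$.

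For (2), projection from the node $b$ on the plane quartic $D_a$ has degree $4-2=2$ and lifts through $C_a\to D_a$ to the hyperelliptic map $C_a\to L_a$, with six ramification points by Riemann-Hurwitz. These are the $r\ne b$ on $D_a$ where $\overline{br}$ is tangent to $D_a$. To identify them with $D_a\cap\{\bp_b^{(1)}=0\}$, I would use Proposition~\ref{P:kummer-polar}(2): the six tropes $T_i$ of $\sK_b$ through $b$ meet $\sK_b$ in double conics $2C_i$, and $\bigcup_i C_i=\sK_b\cap\{\bp_b^{(1)}=0\}$. Setting $P=T_a\sI\cap T_b\sI$, each trope meets $P$ in a line $\ell_i=T_i\cap P$ through $b$, and since $D_a=\sK_b\cap P$ we obtain $D_a\cap T_i=2(C_i\cap P)=2(b+r_i)$ for a residual point $r_i\in\ell_i$. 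As a divisor on the line $\ell_i$ this reads $2b+2r_i$, confirming $\ell_i$ is tangent to $D_a$ at $r_i$, so the six $r_i$ exhaust the ramification locus and lie on $\bp_b^{(1)}=0$ as claimed.

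For (3), I would construct $X\to C_a$ as the double cover ramified at $\tilde p+\tilde q$, where $\tilde p,\tilde q\in C_a$ are the two preimages of $b$ under normalization. The divisor $\tilde p+\tilde q$ is linearly equivalent to $2\fd$ for the degree-$1$ class $\fd$ with $\gamma(\bar\fd)=b$, so such a cover exists and is unique up to quadratic twist; Riemann-Hurwitz gives $g(X)=4$. To assemble the diagram, the key geometric observation is that the tangent-intersection map $Q_b\to L_a$ sends each marked point $p_{T_i}\in Q_b$ (coming from the ray where trope $T_i$ is tangent to the tangent cone of $\sK_b$ at $b$) to $T_{p_{T_i}}Q_b\cap L_a$, which coincides with the projection $\pi_b(\ell_i)$ from (2); this equals the branch point of $C_a\to L_a$ corresponding to $T_i$. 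Thus the branch loci of $C_a\to L_a$ and $C_b\to Q_b\to L_a$ are compatible, and the covers fit into Diagram~\ref{fig:D4} with $X!$ the dihedral $\grpD_4$-Galois closure of $X\to L_a$ supplying $Z$ and $Y$. The identification $\Prym(X,C_a)\simeq\Jac(C_b)$ then follows from the explicit polarized $(2,2)$-isogeny $\pi^*+\phi_*\psi^*$ constructed in the proof of Theorem~\ref{T:galchar}(\ref{T:galchar:generic}) for this dihedral configuration, combined with the remark after Lemma~\ref{L:torelli_aut} (Mumford's converse) to recognize $\Prym(X,C_a)$ as principally polarized.

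The main obstacle is the twist bookkeeping in (3): $X$ is determined by $\fd$ only up to quadratic twist, and one must verify that the twist forced by the shared $2$-level structure encoded in $\sI^\Delta$ produces a polarized isomorphism $\Prym(X,C_a)\simeq\Jac(C_b)$ over $k$ rather than merely over $\ksep$. Concretely this reduces to checking that the isomorphism $\lambda\colon\Jac(C_a)[2]\simeq\Jac(C_b)[2]$ built geometrically from the diagram agrees with the identification carried by $\Delta$.
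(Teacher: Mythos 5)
Your proposal is correct and follows essentially the same route as the paper: part (1) via Proposition~\ref{P:description-gauss-map} (with Corollary~\ref{C:fod_obstruction} supplying the genus $2$ curve), part (2) via Proposition~\ref{P:kummer-polar}(2), and part (3) via the tangent-intersection map $Q_b\to L_a$ matching the six marked points of $Q_b$ with the branch locus of $D_a\dashrightarrow L_a$. The extra details you supply --- the nonsingularity of $b$ on $\sK_a$, the intersection-divisor computation $D_a\cap T_i=2(b+r_i)$, and the explicit flagging of the quadratic-twist bookkeeping --- are consistent with, and somewhat more explicit than, the paper's own argument.
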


\begin{proof}
  \begin{enumerate}
  \item: From Proposition~\ref{P:description-gauss-map}, we know that plane sections of $\sK_a$ are
    of this type.
  \item: Since $D_a$ is a plane quartic curve with a singularity at $b$, we see that projection from $b$ indeed
    yields a degree $2$ map $D_a\to\PP^1$.
    Outside of $b$, this cover is ramified where the tangent line to $D_a$ passes through $b$, i.e., points where $\bp^{(1)}_b$ vanishes.
    Since $D_a\subset \sK_b$, Proposition~\ref{P:kummer-polar}(2) shows this happens on the tropes passing through $b$.
  \item: We use Proposition~\ref{P:kummer-polar}(4) to get a description of $C_b$ as a double cover of $Q_b$, with
    branch locus $B$ marked by the six tropes passing through $b$. We use that $D_a$ lies on $\sK_b$. Thus, projection
    from $b$ puts the line $L_a$ (defined in part~(2)) and the conic $Q_b$ in the same plane, with $L_a \cap Q_b$ being the image of the
    tangent cone of $D_a$ at $b$.
    The images of the six tropes to $\sK_b$ through $b$ yield tangent lines to $Q_b$; each tangent at a point in $B$.
    Each of those lines also passes through a branch point of $D_a\dashrightarrow L_a$.
    
    We define the degree $2$ map $Q_b\to L_a$ by sending a point $p$ on $Q_b$ to $T_p(Q_b)\cap L_a$. If $p\in B$,
    then the tangent line to $Q_b$ is the image of the corresponding trope. The intersection of that trope with
    $D_a$ is a ramification point, so this map sends $B$ onto the branch locus of $D_a\to L_a$.
    
    We see that the branch locus of $Q_b\to L_a$ consists of $L_a\cap Q_b$, which corresponds to the tangent cone
    of $D_a$ at $b$.
    \qedhere
  \end{enumerate}
\end{proof}

\begin{remark}
  The condition in Proposition~\ref{P:polar_construction}(3) regarding whether $b$ admits a genus $2$ curve $C_b$ is entirely arithmetic; over an algebraically closed field
  it is automatically fulfilled. The problem here is that while for any Igusa quartic $\sI$, one can construct for a sufficiently general point $a$ on $\sI$ a Kummer quartic surface $\sK_b=\sI\cap T_b\sI$, the construction from $Proposition~\ref{P:kummer-polar}(3)$ may lead to a degree $6$ locus on a nonsingular conic that is not isomorphic to $\PP^1$. In that case, we have the moduli of a Galois-invariant isomorphism class class of genus $2$ curves that does not contain a Galois-invariant representative. The fact that the field of definition of genus $2$ moduli may differ from the field of definition of any representative is well-known.
  
  Note that by Corollary~\ref{C:fod_obstruction} this problem does not arise for $\sK_a$, because $b$ provides a nonsingular point on it. The obstruction for $\sK_b$ is visible on $C_a$ in the form that while $b$ specifies a Galois-invariant divisor class on $C_a$, this class may fail to contain a Galois-invariant representative divisor. In that case, the cover $X\to C_a$ cannot be defined over $k$ either.
  
  For $\sI=\sI_4$, we see that the six lines in $Proposition~\ref{P:kummer-polar}(3)$ are defined over the ground field, so the conic they project to has rational points. Hence, none of these obstructions occur in this case.
\end{remark}

\begin{remark}
	In the proof of Proposition~\ref{P:polar_construction}(3) it perhaps seems more natural to obtain the double cover $Q_b\to\PP^1$ by projection from a point in $\PP^2$ not on $Q_b$. Indeed, the chord $L_a$ intersects in $Q_b$ in two points. The tangent lines at those points intersect in a point $y$ not on $Q_b$. Projection from $y$ yields a double cover $Q_b\to \PP^1$ that is ramified at $Q_b\cap L_a$.
\end{remark}

As we remarked, if $\P(a,b)=\P(b,a)=0$ then $X$ covers both $C_1$ and $C_2$ and $X$ is hyperelliptic. We conclude this section by showing how in that case the data determining Figure~\ref{fig:V4} can be obtained from an Igusa quartic.

\begin{proposition}\label{P:hyperelliptic_construction}
Suppose $a,b\in \sI\setminus \sE$ are distinct points and that $\P(a,b)=\P(b,a)=0$. Then $T_a\sI\cap T_b\sI$ is a trope to both $\sK_a$ and $\sK_b$. The intersection $\sK_a\cap\sK_b$ is a double-counting conic through $a$, $b$, and five common singularities of $\sK_a,\sK_b$.
\end{proposition}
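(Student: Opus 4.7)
The plan is to show that under the symmetric polar hypothesis the plane $P:=T_a\sI\cap T_b\sI$ is simultaneously a trope of $\sK_a=T_a\sI\cap\sI$ and $\sK_b=T_b\sI\cap\sI$, and then to read off the five common singularities as the five non-distinguished nodes of $\sK_a$ on this trope.

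First I set up the tangent-plane picture. Both $a,b\in\sI\setminus\sE$ are smooth on $\sI$ and not on any singular line of $\sI$, so $b$ is a smooth point of $\sK_a$ and $a$ is a smooth point of $\sK_b$. Because $T_a\sI$ is a hyperplane, the equality $T_b\sK_a=T_a\sI\cap T_b\sI=P$ is automatic, and symmetrically $T_a\sK_b=P$. One needs $T_a\sI\neq T_b\sI$ so that $P$ is a genuine $\PP^2$; this follows for the Igusa quartic from injectivity of its Gauss map on $\sI\setminus\sE$.

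The main ingredient is a classical lemma about Kummer quartics, which I would state and justify before applying: \emph{for a Kummer quartic $\sK\subset\PP^3$ with node $n$, a smooth point $p\in\sK$ satisfies $n\in T_p\sK$ if and only if $p$ lies on one of the six tropes of $\sK$ through $n$, in which case $T_p\sK$ equals that trope.} The natural proof is by duality: the Gauss map $\sK\dashrightarrow\sK^\vee$ contracts each of the six trope-conics through $n$ to one of the six nodes of $\sK^\vee$ lying on the trope of $\sK^\vee$ that corresponds to $n$ under the node-trope duality, and this dual trope is precisely the plane of $(\PP^3)^\vee$ consisting of hyperplanes through $n$; with Proposition~\ref{P:description-gauss-map} in hand, an inspection of this picture identifies the pull-back of that dual trope with the union of the six tropes of $\sK$ through $n$. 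A degree check corroborates the result: the first polar of $n$ is a cubic cutting $\sK$ in a degree-$12$ curve, matching six trope-conics of degree $2$. Carrying the lemma through rigorously in characteristic different from $2$ is the main technical step.

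Applying the lemma to $\sK_a$ with node $a$ and smooth point $b$: the hypothesis $\P(b,a)=0$ places $a\in T_b\sK_a=P$, so $P$ is one of the six tropes of $\sK_a$ through $a$ and $P\cap\sK_a=2C$ for an irreducible conic $C$ passing through $b$. The six nodes of $\sK_a$ on this trope are $a$ together with five non-distinguished nodes $s_1,\dots,s_5$, each arising from a singular line $L_i\subset\sI$ meeting $P$. Since $L_i\subset\sI$ and $s_i\in L_i\cap T_b\sI$, every $s_i$ is also a node of $\sK_b$; so the $s_i$ are common singularities of $\sK_a$ and $\sK_b$ and automatically lie on $C$. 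The symmetric application of the lemma to $\sK_b$ shows that $P$ is equally a trope of $\sK_b$, whose six marked nodes on $P$ are $b$ together with the same $s_1,\dots,s_5$. Hence $\sK_a\cap\sK_b=P\cap\sI=2C$, with $C$ passing through $a$, $b$ and the five common singularities, proving both assertions of the proposition.
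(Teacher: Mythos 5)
Your argument is correct and follows essentially the same route as the paper: the ``classical lemma'' you isolate is precisely Proposition~\ref{P:kummer-polar}(2) (together with the observation that a trope is the tangent plane along its conic), which the paper invokes directly, and your identification of the five common nodes via the singular lines of $\sI$ meeting $T_a\sI\cap T_b\sI$ matches the paper's. The only difference is cosmetic: you apply the polar criterion to $\sK_a$ at its node $a$, while the paper applies it to $\sK_b$ at $b$ and then appeals to the symmetry of the hypotheses.
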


\begin{proof}
Writing $F$ for the defining equation of $\sI$. The condition $\P(a,b)=\P(b,a)=0$ implies that $a$ lies on $\bp^{(1)}_{b,F}=\bp^{(3)}_{b,F}=0$. But Proposition~\ref{P:kummer-polar}(2) identifies this as one of the six tropes of $\sK_b$ through $b$. By symmetry we see this is also one of the six tropes of $\sK_a$ through $a$. Intersecting with $\sI$ hence yields a double-counting conic that lies on both $\sK_a$ and $\sK_b$. On $\sK_a$, the conic passes through six singularities -- specifically, the point $a$ as well as the five other intersection points of $T_a\sI$ with the singular locus of $\sI$. By symmetry, those five are also singularities on $\sK_b$. 
\end{proof}

\section{Prym varieties of curves of genus $3$}\label{S:prymg3}

The description in the previous chapter describes all the ways in which $\Jac(C_b)$ can arise as a Prym variety of a ramified double cover $X\to C_a$ and that the curve $C_a$ can be recovered as plane sections of $\sK_b$ passing through the distinguished node.

A general plane section of $\sK_b$ yields a quartic plane curve $D$, i.e., a curve of genus $3$, and pull-back under the double cover $\Jac(C_b)\to\sK_b$ yields an unramified double cover $X\to D$ and that $\Prym(X,D)=\Jac(C_b)$. As Verra \cite{Verra1987prym} describes, these models can be understood in terms of Abel-Prym maps. Over a non-algebraically closed base field, not all curves $D$ for which $\Prym(X,D)=\Jac(C_b)$ can be obtained as a plane section of $\sK_b$. With a little bit of Galois cohomology we can describe the situation.

Let $\pi\colon X\to D$ be an unramified double cover of a non-hyperelliptic curve $D$ of genus $3$ and let $\iota$ be the corresponding fixed-point-free involution on $X$.
The kernel of $\pi_*$ is not connected, but consists of two disjoint components that we denote by $\ker(\pi_*) = \Prym(X, D) \sqcup \ker(\pi_*)'$. 
There is a natural map $X\to \ker(\pi_*)$ by $P\mapsto [P-\iota(P)]$. The image does \emph{not} lie in $\Prym(X,D)$ (see \cite{bruin:prym}*{Remark~6.3}), but in the other component. That component is a principal homogeneous space of the Prym variety, so its isomorphism class corresponds to a class in the Galois cohomology group $\H^1(k,\Prym(X,D))$. It is easy to see it is of period dividing two, so writing $\Delta=\Prym(X,D)[2]$, the class is the image of some $\xi\in\H^1(k,\Delta)$. The involution $\iota_*$ acts on $\ker(\pi_*)'$ and the quotient is some twist $\sK^{(\xi)}$ of the Kummer surface of $\Prym(X,D)$. The embedding $X\subset\ker(\pi_*)'$ yields a model $D\subset \sK^{(\xi)}$ which, if $\sK^{(\xi)}$ admits a quartic model in $\PP^3$, is a plane section.

When $\Prym(X,D)=\Jac(C)$ for some curve $C$ of genus $2$, the Kummer surface $\sK_C$ admits a quartic model in $\PP^3$. There are two twists of particular interest: the surface $\sK_C$ itself and its dual $\sK_C^\vee$.

The twist $\sK^{(\xi)}$ can be understood in terms of the relation between $\Delta$ and $\Jac(D)[2]$, so we are led to consider various subgroups of the automorphism group $\Sp_6(\FF_2)$ of $\Jac(D)[2]$. A computer algebra package like GAP \cite{GAP4} or Magma \cite{magma} can easily compute the conjugacy classes of the subgroups and verify the various facts and characterizations of subgroups that we need. See \cite{electronic_resources} for a transcript.

The unramified double cover $X\to D$ marks a $2$-torsion point on $\Jac(D)[2]$ and the self-duality implies a filtration of group schemes
\[0\subset V_1 \subset V_5\subset\Jac(D)[2],\text{ with }\Delta\simeq V_5/V_1.\]
We need to consider the subgroup $H\subset\Sp_6(\FF_2)$ of the automorphism group that stabilizes such a decomposition. This is the point stabilizer of a single non-trivial $2$-torsion point, so it must be a subgroup of index $63$, and there is a unique conjugacy class of those. Indeed, $H$ acts on $V_5/V_1$ through the full $\Sp_4(\FF_2)$, with kernel an elementary $2$-group of order $32$. The action of $H$ on the $28$ bitangents marks orbits of lengths $12$ and $16$.
Viewing the odd theta characteristics of $D$ as quadratic forms on $\Jac(D)[2]$, the orbit of size $16$ is the collection $\{\theta : \theta(\epsilon) = 1 \in \mathbb{F}_2\}$,
where $\epsilon$ is the kernel of $\pi^*\: \Jac(D)[2] \to \Jac(X)[2]$.

If $\sK^{(\xi)}$ admits a quartic model in $\PP^3$ then it has $16$ \emph{tropes} that intersect in double-counting conics, with $\Delta$ acting simply transitively on them.
In other words, the collection of tropes $\Delta'$ is a torsor for $\Delta$, and we have the equality of classes $[\Delta'] = \xi$ in $\H^1(k, \Delta)$. 
On a plane section $D$ of $\sK^{(\xi)}$ these tropes cut out bitangents. These must be the bitangents in the orbit of length $16$ under the stabilizer $H$.
Conversely, the Galois action on the Abel-Prym image of $D \to \sK^{(\xi)}$ determines a Galois action on the bitangents, and therefore on $\Delta'$. In particular, the Galois
action on $\Jac(D)[2]$ determines $\xi$.
On the specific quartic surfaces $\sK_C$ and $\sK_C^\vee$ we see that the possible group actions are further restricted, as we discuss below.

On $\sK_C$ the tropes are partitioned in a set of six that pass through the distinguished node and $10$ that do not. Indeed, there is an index $16$ subgroup $H'\subset H$ that stabilizes such a partition. If Galois acts through $H$ then $\sK^{(\xi)}$ has a Galois-stable node and hence $\xi$ is trivial. The conjugacy class of $H'$ can also be characterized in a different way: it is the class of the stabilizer of a direct sum decomposition
\[\Jac(D)[2]\simeq V_2\times\Prym(X,D)[2],\]
where $V_2$ admits a filtration $0\subset V_1\subset V_2$.

Turning to the dual $\sK_C^\vee$, we have a unique distinguished trope, dual to the distinguished node on $\sK_C$. This leads us to the stabilizer $H''\subset H$ of a bitangent in the length $16$ orbit, which is also of index $16$ in $H$. In fact, we have $H'\simeq H''\simeq (\ZZ/2)\times \Sp_4(\FF_2)$, but they are non-conjugate in $\Sp_6(\FF_2)$.
Conversely, if $D$ has a distinguished bitangent, then there is a distinguished hyperplane section of $\sK^{(\xi)}$, so in particular $\sK^{(\xi)} \subset \PP^3$.

The group $H''$ also arises as the intersection of the stabilizers of two theta characteristics, one odd and one even, which are groups whose conjugacy classes are uniquely determined by their indices $28$ and $36$ respectively.
We summarize the characterizations above in the proposition below.
\begin{proposition}\label{P:kummer-plane-sections}
Let $\pi\colon X\to D$ be an unramified double cover with $D$ a non-hyperelliptic curve of genus $3$. Let $C$ be a curve of genus $2$ such that $\Jac(C)=\Prym(X,D)$.
\begin{enumerate}
\item[(a)] A model of $D$ can be obtained as a plane section of $\sK_C$ if and only if $\Jac(C)[2]$ occurs as a direct symplectic summand of $\Jac(D)[2]$.
\item[(b)] A model of $D$ can be obtained as a plane section of $\sK_C^\vee$ if and only if $D$ has an even and odd theta characteristic $\theta_e,\theta_o$ over the ground field such that $\theta_e-\theta_o$ generates the kernel of $\pi^*\colon \Jac(D)\to \Jac(X)$.
\end{enumerate}
\end{proposition}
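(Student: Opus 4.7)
The plan is to follow the framework laid out in the paragraphs immediately preceding the statement. A plane model of $D$ inside a quartic Kummer surface $\sK \subset \PP^3$ identifies $\sK$ with the twist $\sK^{(\xi)}$, where $\xi \in \H^1(k,\Delta)$ with $\Delta = \Prym(X,D)[2]$, and $\xi$ measures the class of the torsor $\Delta'$ of tropes under $\Delta$. The Galois action on $\Jac(D)[2]$ factors through the subgroup $H \subset \Sp_6(\FF_2)$ stabilizing the filtration $0 \subset V_1 \subset V_5 \subset \Jac(D)[2]$, since this filtration is defined over $k$. Both parts of the proposition amount to determining which subgroup of $H$ the Galois action factors through, and translating the resulting geometric distinction of $\sK_C$ versus $\sK_C^\vee$ into arithmetic data on $D$.

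For part~(a), I would use that $\sK_C$ is distinguished among twists of the Kummer surface of $\Jac(C)$ by carrying a $k$-rational distinguished node, namely the image of the origin. Hence $D$ appears as a plane section of $\sK_C$ precisely when $\sK^{(\xi)} \simeq \sK_C$, which forces $\xi = 0$ and conversely. I would then verify the equivalence: vanishing of $\xi$ is equivalent to the Galois action factoring through the index $16$ subgroup $H' \subset H$ identified above as the stabilizer of a direct sum decomposition $\Jac(D)[2] \simeq V_2 \oplus \Delta$ with $V_1 \subset V_2$, and this in turn is equivalent to the short exact sequence of symplectic spaces $0 \to V_1 \to V_5 \to \Delta \to 0$ admitting a Galois-equivariant splitting together with a Galois-equivariant complement, i.e., to $\Jac(C)[2] \simeq \Delta$ occurring as a direct symplectic summand of $\Jac(D)[2]$.

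For part~(b), the surface $\sK_C^\vee$ is distinguished by carrying a $k$-rational distinguished trope, dual to the distinguished node of $\sK_C$. Thus $D$ appears as a plane section of $\sK_C^\vee$ exactly when $\sK^{(\xi)}$ has a $k$-rational trope, which on $D$ cuts out a $k$-rational bitangent lying in the length $16$ orbit of $H$. I would then appeal to the characterization already recorded: the stabilizer $H'' \subset H$ of such a bitangent equals the intersection of the stabilizers of a unique pair $(\theta_o, \theta_e)$ of an odd and an even theta characteristic, and the difference $\theta_e - \theta_o \in \Jac(D)[2]$ is the generator $\epsilon$ of $\ker \pi^*$. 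Conversely, given $k$-rational $\theta_e, \theta_o$ with difference $\epsilon$, the Galois action factors through $H''$, producing a $k$-rational bitangent in the length $16$ orbit and hence a $k$-rational trope on $\sK^{(\xi)}$, identifying it with $\sK_C^\vee$.

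The main obstacle will be the careful verification in part~(a) that triviality of the twist class $\xi$ is indeed equivalent to the existence of a Galois-equivariant direct sum decomposition respecting the filtration. The identification of $\xi$ with the class of the torsor $\Delta'$ of tropes must be used precisely to pass from an abstract complement of $V_5$ in $\Jac(D)[2]$ to a $k$-rational section of $\Delta'$, and the symplectic structure on $\Jac(D)[2]$ has to intervene to ensure that a linear complement can be chosen to be a symplectic summand. Part~(b), by contrast, is largely a translation through the already-established characterization of $H''$ via its theta characteristics.
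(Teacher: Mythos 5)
Your proposal follows essentially the same route as the paper: the preceding discussion there already identifies the twist $\sK^{(\xi)}$ via the torsor of tropes, characterizes $\sK_C$ by its rational node (forcing $\xi=0$ and Galois action through $H'$, the stabilizer of a direct sum decomposition) and $\sK_C^\vee$ by its rational distinguished trope (Galois action through $H''$, the joint stabilizer of an even and an odd theta characteristic differing by $\epsilon$), and the proposition is stated as a summary of exactly these characterizations. Your plan, including the flagged verification that triviality of $\xi$ corresponds to a Galois-equivariant symplectic splitting, matches the paper's argument.
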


\begin{corollary}\label{C:I_plane_sections}
Let $\sI^\Delta$ be an Igusa quartic with projective dual a Segre cubic $\sS^\Delta$. Let $W\subset \PP^4$ be a plane. Suppose that $D=W\cap\sI^\Delta$ is a smooth plane quartic curve and that the intersection $L=\sS^\Delta\cap W^\vee$ of the line $W^\vee$ dual to $W$ with the dual $\sS^\Delta$ of $\sI^\Delta$ is reduced. Then
\[\Jac(D)[2]=E[2]\times\Delta,\]
where $E[2]$ is the exponent $2$ group scheme whose non-trivial part is isomorphic to $L$ as a $0$-dimensional variety.
\end{corollary}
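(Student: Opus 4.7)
The plan is to use projective duality to identify the three geometric points of $L$ with three tangent hyperplanes of $\sI^\Delta$ containing $W$, and then apply Proposition~\ref{P:kummer-plane-sections}(a) to match these with the three non-trivial elements of the $2$-dimensional symplectic complement of $\Delta$ inside $\Jac(D)[2]$.

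First, since $\sS^\Delta$ is the image of the Gauss map $\gamma\colon a\mapsto T_a\sI^\Delta$, a point of $W^\vee$ lies on $\sS^\Delta$ exactly when it is the tangent hyperplane at some $a\in\sI^\Delta$ with $W\subset T_a\sI^\Delta$. The ten nodes of $\sS^\Delta$ are precisely the $\gamma$-images of the ten irreducible components of the special locus $\sE\subset\sI^\Delta$ (each such component being contracted by $\gamma$), and a line through any such node meets $\sS^\Delta$ there with multiplicity at least $2$. Hence the reducedness of $L$ forces its three geometric points to be smooth points of $\sS^\Delta$, which lift uniquely and Galois-equivariantly under $\gamma$ to three points $a_1,a_2,a_3\in\sI^\Delta\setminus\sE$ satisfying $D = W\cap\sI^\Delta \subset \sK_{a_i} = T_{a_i}\sI^\Delta\cap\sI^\Delta$.

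Applying Proposition~\ref{P:kummer-plane-sections}(a) to $a_1$ shows that $\Delta\simeq\Jac(C_{a_1})[2]$ is a direct symplectic summand of $\Jac(D)[2]$, so $\Jac(D)[2]\simeq\Delta\oplus E$ as symplectic Galois modules for a $2$-dimensional non-degenerate symplectic $\FF_2$-module $E$. It remains to match the three non-zero elements of $E$ with the three points of $L$. For any non-zero $e\in E$, the relation $e\in E=\Delta^\perp$ gives $\Delta\subset e^\perp$, and a dimension count yields $e^\perp = \Delta\oplus\langle e\rangle$, providing a canonical Galois-equivariant splitting of $0\to\langle e\rangle\to e^\perp\to\Delta\to 0$. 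The corresponding unramified double cover $\pi_e\colon X_e\to D$ with $\ker\pi_e^* = \langle e\rangle$ therefore satisfies $\Prym(X_e,D)[2]\simeq\Delta$ as a symplectic Galois module, so $\Prym(X_e,D) = \Jac(C_{a_e})$ for some point $a_e\in\sI^\Delta\setminus\sE$. The converse direction of Proposition~\ref{P:kummer-plane-sections}(a) then gives $D\subset\sK_{a_e}$, i.e., $\gamma(a_e)\in L$.

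The assignment $e\mapsto\gamma(a_e)$ is Galois-equivariant and injective (distinct $e$ produce distinct Prym covers, and Torelli combined with the $2$-level structure $\Delta$ pins down $a_e$ uniquely), so by cardinality it is a Galois-equivariant bijection between the three non-zero points of $E$ and the three points of $L$. Taking $E[2]$ to be the unique exponent-$2$ group scheme of order $4$ whose non-trivial locus is $L$, we conclude $\Jac(D)[2]\simeq E[2]\times\Delta$. The main subtlety to verify is that the identifications of $\Prym(X_e,D)$ with $\Jac(C_{a_e})$ carrying $2$-level structure $\Delta$ are genuinely Galois-equivariant; this follows from the canonical nature of the splittings above combined with the moduli interpretation of $\sI^\Delta$, but warrants careful bookkeeping.
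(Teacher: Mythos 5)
Your first step matches the paper's: reducedness of $L$ keeps its three geometric points away from the ten nodes of $\sS^\Delta$ (the Gauss images of the components of $\sE$), so they lift to points $a_1,a_2,a_3\in\sI^\Delta\setminus\sE$ whose tangent hyperplanes contain $W$, and Proposition~\ref{P:kummer-plane-sections}(a), applied over $k(a_i)$ after a possible quadratic extension realizing $\sK_{a_i}$ as the Kummer of an abelian surface, gives the splitting; the paper then descends to $k$ using Galois-invariance of the set $\{a_1,a_2,a_3\}$.

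The gap is in your matching of the non-trivial elements of $E$ with the points of $L$, and it is twofold. First, the complement $E$ you extract from applying Proposition~\ref{P:kummer-plane-sections}(a) at $a_1$ is not canonical: the proposition only asserts that \emph{some} non-degenerate two-dimensional $V_2$ containing $\ker\pi_1^*$ has $V_2^\perp\simeq\Delta$, and over $\kalg$ there are $16$ such $V_2$; nothing at that stage singles out the one spanned by the classes attached to $L$, so ``the three non-zero elements of $E$'' is not yet a well-defined set. Second, and more seriously, the step ``the converse of Proposition~\ref{P:kummer-plane-sections}(a) gives $D\subset\sK_{a_e}$, i.e.\ $\gamma(a_e)\in L$'' does not follow: the converse produces \emph{a model} of $D$ as a plane section of the abstract Kummer surface of $\Prym(X_e,D)$ via the Abel--Prym map, not the statement that the \emph{given} embedded curve $D=W\cap\sI^\Delta$ lies on $T_{a_e}\sI^\Delta$, which is what $\gamma(a_e)\in W^\vee\cap\sS^\Delta$ means. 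That these differ is visible over $\kalg$: there \emph{every} nonzero $e\in\Jac(D)[2]$ satisfies your criterion, since $e^\perp/\langle e\rangle$ is always a split symplectic summand isomorphic to $(\ZZ/2\ZZ)^4\simeq\Delta$, so your argument would place all $63$ classes on the three-point scheme $L$. The criterion ``$\Prym(X_e,D)[2]\simeq\Delta$ as a summand'' sees only level structure, whereas membership in $L$ pins down the Prym variety together with the position of the plane $W$. (Your injectivity claim has the same flaw: distinct covers can have isomorphic Pryms with isomorphic level structures.) The paper runs the matching only in the forward direction: each $a_i$ realizes $D$ as a plane section of $\sK_{a_i}$ \emph{inside} $\sI^\Delta$ and thereby marks the kernel class $\epsilon_i$ of the induced cover; these three classes are the non-identity elements of the cofactor $E[2]$, and Galois permutes them as it permutes $L$.
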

\begin{proof}
The curve $D$ already lies on $\sI^\Delta$, so for any point $b\in\sI^\Delta$ such that $D$ lies on $T_b\sI^\Delta$, we have that $D=W\cap\sK_b$ and hence a plane section. But those tangent planes correspond exactly to $L$, which is the intersection of a line $W^\vee$ with a cubic hypersurface $\sS^\Delta$, so we find three such points $b_1,b_2,b_3$ over the algebraic closure.

Hence, over $k(b_i)$, we see that $D$ occurs as a plane section of $\sK_{b_i}$. Even though $\sK_{b_i}$ is a quartic Kummer surface with a distinguished node, one may need to make a quadratic extension in order to find an abelian surface $A$ such that $\sK_{b_i}=A/\langle-1\rangle$. However, over such an extension, Proposition~\ref{P:kummer-plane-sections} yields that $\Delta$ is a direct summand of $\Jac(D)[2]$, at least over $k(b_i)$. Symmetry in $b_i$ and Galois-invariance yields that $\Delta$ is also a direct summand over the base field.

Note that marking a cyclic subgroup in the cofactor $E[2]$ amounts to choosing one of the three non-identity elements in it, which must pair up with our choice of $b_i$. This will yield a Galois-stable $2$-torsion class on $D$. If this class does not admit a representing divisor over $k(b_i)$, then the corresponding cover $X\to D$ cannot be realized over the base field either, explaining how $\sK_{b_i}$ may fail to be a Kummer surface of an abelian surface $\Prym(X,D)$ over its field of definition.
\end{proof}

\begin{remark}
We explain how the situation of Theorem~\ref{T:intro_construction} arises as a limit case of the general construction discussed in this section.
Corollary~\ref{C:I_plane_sections} shows that a plane quartic section $D=W\cap \sI$ generally lies in $T_b\sI$ for three different points $b$, giving rise to three Prym varieties of $D$. This corresponds to the three different choices of filtration $0\subset V_1\subset V_2$ for a fixed isotropic $V_2\subset \Jac(D)[2]$.

We write $\gamma\colon \sI\to\sS$ for the Gauss map that sends a point on $\sI$ to its tangent space (as a point in dual space). The map is birational, so outside of the singular loci on either side we have $\P_\sI(a,b)=0$ if and only if $\P_\sS(\gamma(b),\gamma(a))$. Hence, if $\P_\sI(a,b)=0$ then $\gamma(a)\in T_{\gamma(b)} \sS$. In particular, the line $W^\vee$ dual to the plane $W=T_a\sI\cap T_b\sI$ is tangent to $\gamma(b)$ and hence $W^\vee\cap \sS=\gamma(a)+2\gamma(b)$.

If we now consider a family of points $a_t$ such that $b\in T_{a_0}\sI$ then we get a family of curves $D_t=T_{a_t}\sI\cap T_b\sI\cap\sI$ where the general member is a genus $3$ curve $D_t$ together with an unramified double cover $X_t\to D_t$ and $\Prym(X_t,D_t)=\Jac(C_b)$. For $t=0$ we have a ramified double cover $X_0\to D_0$ such that $\Prym(X_0,D_0)=\Jac(C_b)$.
\end{remark}

\section{The Igusa quartic as a symmetroid}\label{S:point_configuration}
A particularly interesting picture emerges if we restrict to $\Delta$ for which $\sK_b$ and $\sK_b^\vee$ are isomorphic, i.e., such that $C_b$ has a rational Weierstrass point. In that situation, $\sI^\Delta$ can be expressed as a \emph{symmetroid}: as the locus of singular members of a linear system of quadrics or, equivalently, as a determinant of a symmetric matrix of linear forms.
 
We shall fix points $p_1, p_2, \ldots, p_5 \in \PP^3$ in general position. For ease of exposition we take them individually defined over the base field and assign them coordinates
\[
p_1 := ( 1 : 0 : 0 : 0 ), \
p_2 := ( 0 : 1 : 0 : 0 ), \
p_3 := ( 0 : 0 : 1 : 0 ), \
p_4 := ( 0 : 0 : 0 : 1 ), \
p_5 := ( 1 : 1 : 1 : 1 ).
\]
This yields a symmetroid expression for the classical Igusa quartic $\sI_4$,
but we only need the points to be Galois-stable as a set.
We denote by $\mathcal{L}$ the linear system of
quadrics in $\PP^3$ vanishing at $p_1, \ldots, p_5$. It is of projective dimension $9-5=4$ and is spanned by
\[
y_0 y_1 - y_2 y_3, \
y_0 y_2 - y_2 y_3, \
y_0 y_3 - y_2 y_3, \
y_1 y_2 - y_2 y_3, \
y_1 y_3 - y_2 y_3.
\]
We write $x_0,\ldots,x_4$ for coordinates relative to this basis and set $x_5=-x_0-\cdots-x_4$. Then the Gram matrix of a member of $\sL$ with coordinates $(x_0:\cdots:x_4)$ is
\[
\label{E:igusa_determinantal}
\mathcal{A}(x) :=
\begin{bmatrix}
	0 & x_0 & x_1 & x_2 \\
	x_0 & 0 & x_3 & x_4 \\
	x_1 & x_3 & 0 & x_5 \\
	x_2 & x_4 & x_5 & 0 \\
\end{bmatrix}
\]
To a point $p\in \PP^3$ with $p\notin\{p_1,\ldots,p_5\}$ we can associate the linear subsystem  $\sL_p\subset \sL$ of quadrics that vanish at $p$. 
This yields the rational map $\psi\colon \PP^3\dashrightarrow \sL^\vee$, defined outside $p_1,\ldots,p_5$.
\[
\psi(y_0, \ldots, y_3) =
(
y_0 y_1 - y_2 y_3 : 
y_0 y_2 - y_2 y_3 : 
y_0 y_3 - y_2 y_3 : 
y_1 y_2 - y_2 y_3 : 
y_1 y_3 - y_2 y_3
).
\]

\begin{proposition} \label{prop:determinantal-igusa-model}
	The image $\Ssym$ of $\psi$ is isomorphic to the Segre cubic
	$\mathcal{S}_3$.
	As a consequence, $\sI_4$ is isomorphic to the quartic hypersurface defined by
	\[\Isym\colon \det(\sA)=0.\]
\end{proposition}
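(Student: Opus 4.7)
The overall strategy is to first identify $\Ssym$ with the Segre cubic, then deduce the symmetroid description of $\sI_4$ via the projective duality between $\sS_3$ and $\sI_4$ that the paper has already recorded. Both steps use classical projective geometry of five points in $\PP^3$.

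For the identification of $\Ssym$, I would first argue that $\psi$ is birational onto its image. For a generic $p \in \PP^3$, the $3$-dimensional linear system $\sL_p \subset \sL$ is spanned by four quadrics whose common intersection is zero-dimensional, and an elementary check shows the scheme-theoretic base locus of $\sL_p$ is exactly $\{p_1,\ldots,p_5,p\}$; hence $p$ is recoverable from $\psi(p)$ as the unique base point of $\sL_p$ not in $\{p_1,\ldots,p_5\}$. A Bezout count then pins down the degree: a generic line $L \subset \sL^\vee$ is the zero locus of three linear forms on $\sL^\vee$, which pull back under $\psi$ to three quadrics from $\sL$ in $\PP^3$; these meet in $2^3 = 8$ points, of which five are the forced base points, leaving three residual intersections. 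Birationality of $\psi$ then gives $\deg \Ssym = 3$, so $\Ssym$ is a cubic threefold in $\PP^4$. Next, for each pair $\{i,j\} \subset \{1,\ldots,5\}$, the line $\ell_{ij} = \overline{p_i p_j}$ is contracted by $\psi$, since any quadric in $\sL$ meeting $\ell_{ij}$ in a third point must contain it; this yields $\binom{5}{2} = 10$ singular points of $\Ssym$. A local analysis on the blow-up of $\PP^3$ at the five points shows that the proper transform of each $\ell_{ij}$ is a smooth rational curve with normal bundle $\mathcal{O}(-1)^{\oplus 2}$, which contracts to an ordinary node of $\Ssym$. Since a normal cubic threefold in $\PP^4$ admits at most ten nodal singularities, with the Segre cubic $\sS_3$ being the unique realization of this bound (classical, see \cite{Dolgachev2012classical}*{\S9.4.4}), we conclude $\Ssym \simeq \sS_3$.

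For the symmetroid statement, I would show that $\Isym$ coincides with the projective dual $\Ssym^\vee$. By construction $\sA(x)$ is the Gram matrix of the quadric $Q_x = \sum_i x_i Q_i \in \sL$, so $\Isym = \{\det\sA(x)=0\}$ is the locus of singular members of $\sL$. On the other hand, a point $x \in \sL$, viewed as a hyperplane $H_x \subset \sL^\vee$, is the projective tangent hyperplane to $\Ssym$ at a smooth point $\psi(p)$ precisely when the linear form defining $H_x$ vanishes on $d\psi_p(T_p \PP^3)$. Reading $d\psi_p$ off the Jacobian of the five defining quadrics, this condition rewrites as $\sum_i x_i \nabla Q_i(p) = \nabla Q_x(p) = 0$, that is, as $p$ being a singular point of $Q_x$. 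Hence $\Ssym^\vee = \Isym$, and combining with $\Ssym \simeq \sS_3$ and biduality yields $\Isym \simeq \sS_3^\vee \simeq \sI_4$.

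The main obstacle I anticipate is the verification that each of the ten contracted lines produces a genuine node on $\Ssym$ rather than a more degenerate singularity. This can be handled either by the normal-bundle argument on the blown-up $\PP^3$ sketched above or by a direct Jacobian/Hessian rank computation at each image point; either route is classical but requires care. Once this point is settled, the rest of the argument reduces to the degree count and the standard tangent-hyperplane characterization of the dual variety.
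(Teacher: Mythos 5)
Your argument is correct and reaches the same two pillars as the paper -- the characterization of $\sS_3$ as the unique $10$-nodal cubic threefold, and projective duality between $\sS_3$ and $\sI_4$ -- but it gets to the first pillar by a genuinely different route. The paper simply computes the image of $\psi$ by elimination, exhibits an explicit cubic equation, and checks that it has ten nodes; you instead argue synthetically: birationality of $\psi$ from recoverability of $p$ as the sixth base point of $\sL_p$, degree $8-5=3$ by B\'ezout, the ten contracted lines $\ell_{ij}$ as singular points, and the $(-1,-1)$ normal bundle computation on the blow-up to see that each contraction produces an ordinary node. Your route avoids the elimination computation and explains \emph{why} the ten nodes are there (they are the images of the lines $\overline{p_ip_j}$), which is more illuminating; the cost is exactly the point you flag, plus one you do not: to invoke the uniqueness of the $10$-nodal cubic you must also know that $\Ssym$ has \emph{no other} singularities, i.e.\ that the morphism from the blow-up is an isomorphism away from the ten $(-1,-1)$-curves, not merely generically injective. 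That requires checking that no other curve is contracted (e.g.\ the twisted cubic through $p_1,\dots,p_5$ maps to a line, and general lines map $2:1$ onto conics) and that $\psi$ separates tangent vectors off the exceptional set; this is classical but is the part of your argument that genuinely needs to be written out. For the duality step your computation that the tangent hyperplane to $\Ssym$ at $\psi(p)$ is cut out by $x$ with $\nabla Q_x(p)=0$ is precisely the content of the paper's one-line appeal to the symmetroid structure (the paper computes the dual of $\Isym$ as the image of the kernel map, you compute the dual of $\Ssym$ as the locus of singular members; these are the two sides of the same identity), so there the two proofs coincide.
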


\begin{proof}
	It is straightforward to verify that the image of $\psi$ is defined by the equation
	\[
	x_0 x_2 x_3 - x_1 x_2 x_3 - x_0 x_1 x_4 + x_1 x_2 x_4 + x_1 x_3 x_4 - x_2 x_3 x_4.
	\]
	This is a cubic threefold in $\PP^4$ with $10$ nodal singularities, defined over the base field. This property characterizes the Segre cubic.
	The variety $\Isym$ is defined by a symmetric determinantal equation, so the image of $\psi$ is the dual of $X$. Since the Segre cubic and Igusa quartic are projectively dual, we find that $\Isym\simeq \sI_4$.
\end{proof}

For a pair of sufficiently general points $p_6,p_7\in\PP^3$ we consider the linear system $\sL_{p_6,p_7}=(\sL_{p_6}\cap \sL_{p_7})\subset \sL$. Its base locus is a complete intersection of three quadrics in $\PP^3$, so it contains an eighth point $p_8$. Indeed, the line spanned by $\psi(p_6),\psi(p_7)$ intersects $\sS$ in a third point, which is $\psi(p_8)$. Thus the points $\{p_1,\ldots,p_8\}\subset\PP^3$ form the base locus of a net of quadrics $\sL_{p_7,p_8}$. Such a configuration of points is classically known as a \emph{Cayley octad}. A Cayley octad determines a curve of genus $3$ with an even theta characteristic. The curve can be obtained as the locus of singular quadrics in the net. If the points are labelled, they determine $2$-level structure on the curve. In our case, the distinguished role of $\{p_6,p_7\}$ marks a $2$-torsion class on the curve.

\begin{proposition} \label{prop:5+2+1}
	With the notation above, let $p_6,p_7\in\PP^3$ be points such that $p_1,\ldots,p_7$ are in general position. Let $p_8$ be the eighth point in the base locus of $\sL_{p_6,p_7}$ and let $D=\sL_{p_6,p_7}\cap \Isym$ be the locus of singular quadrics.
	Then the hyperplane $\sL_{p_8}$ contains $D$ and is tangent to $\Isym$.
\end{proposition}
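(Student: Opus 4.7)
The plan is to prove the inclusion $D \subseteq \sL_{p_8}$ and the tangency of $\sL_{p_8}$ to $\Isym$ separately, both essentially by unwinding Proposition~\ref{prop:determinantal-igusa-model}.

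First I would handle the inclusion, which is almost definitional. By construction, $p_8$ lies in the base locus of the net $\sL_{p_6,p_7}$, so every $Q \in \sL_{p_6,p_7}$ satisfies $Q(p_8) = 0$; hence $Q \in \sL_{p_8}$, giving $\sL_{p_6,p_7} \subseteq \sL_{p_8}$ as linear subsystems of $\sL$. Intersecting both sides with $\Isym$ yields $D = \sL_{p_6,p_7} \cap \Isym \subseteq \sL_{p_8} \cap \Isym$, so in particular $D \subseteq \sL_{p_8}$.

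For the tangency, I would pass to the dual picture via $\psi$. From the coordinate formula for $\psi$, evaluating the basis quadrics spanning $\sL$ at a point $q \in \PP^3$ defines a linear form on $\sL$ whose zero locus is precisely $\sL_q$; thus under the pairing $\sL \otimes \sL^\vee \to k$, the hyperplane $\sL_q \subseteq \sL$ is exactly the annihilator of $\psi(q) \in \sL^\vee$. By Proposition~\ref{prop:determinantal-igusa-model}, $\psi(p_8) \in \Ssym$, and the same proposition identifies $\Ssym$ as the projective dual $\Isym^\vee$. Under the general-position hypothesis on $p_1,\ldots,p_7$, the point $\psi(p_8)$ avoids the ten nodes of the Segre cubic and so lies on the smooth locus of $\Ssym$; by the definition of projective duality, the hyperplane in $\sL$ annihilating $\psi(p_8)$ is then a genuine tangent hyperplane to $\Isym$. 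That hyperplane is $\sL_{p_8}$.

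The only nontrivial point is the duality dictionary identifying $\sL_{p_8}$ with the annihilator of $\psi(p_8)$, which is immediate from the formula defining $\psi$. Once that is noted, the proposition reduces to the projective-duality statement $\Ssym = \Isym^\vee$ already proved, with the genericity assumption only invoked to ensure $\psi(p_8)$ is a smooth point of $\Ssym$.
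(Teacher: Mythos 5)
Your proposal is correct and follows essentially the same route as the paper: the inclusion $D\subseteq\sL_{p_8}$ comes from $p_8$ lying in the base locus of the net, and the tangency comes from $\psi(p_8)\in\Ssym$ together with the projective duality $\Ssym=\Isym^\vee$ established in Proposition~\ref{prop:determinantal-igusa-model}. You merely spell out the duality dictionary (identifying $\sL_{p_8}$ with the annihilator of $\psi(p_8)$) and the smoothness of $\psi(p_8)$ on $\Ssym$ more explicitly than the paper does.
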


\begin{proof}
	Since $\psi(p_8)\in \Ssym$, we have that $\sL_{p_8}$ is tangent to $\Isym$ by duality.
	Furthermore, since $\sL_{p_6,p_7}$ has $p_8$ in its base locus, we have $\sL_{p_6,p_7}\subset\sL_{p_8}$.
\end{proof}

\begin{remark}
	Note that a sufficiently general plane section $D=W\cap \Isym$ is a smooth plane quartic. The plane $W\subset \sL$ yields a line in $\sL^\vee$, which generally has three intersection points $p_6,p_7,p_8\in \Ssym$. We see that $D$ lies in three tangent hyperplanes to $\Isym$: $\sL_{p_6},\sL_{p_7},\sL_{p_8}$ and the intersection of any pair of them yields $W$. By the discussion above, we see that $p_1,\ldots,p_8$ forms a Cayley octad.
\end{remark}

\begin{corollary}
	Let $p_1, \ldots, p_5, p_6, p_7, p_8$ be the points of a Cayley octad. Then the plane quartic
	$D := \Isym \cap \sL_{p_6, p_7}$ is contained in the Kummer surface $\sL_{p_8} \cap \Isym$.
	
	In other words, the genus $3$ curve $D$ determined by the Cayley octad $p_1,\ldots,p_8$ with the $2$-torsion point represented by the differences of the theta characteristics marked by $p_6,p_7$ yield an unramified double cover $X\to D$ such that $\Prym(X,D)=\Jac(C)$, where $C$ is a genus $2$ curve that is a double cover of the rational normal curve through $p_1,\ldots,p_5,p_8$, and ramification locus at those points.
\end{corollary}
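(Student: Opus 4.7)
The plan is to proceed in three steps, assembling what has been established earlier in the section and then pinning down the genus~$2$ curve $C$ via the symmetroid description.

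The first assertion, that $D$ lies in the Kummer surface $\sL_{p_8} \cap \Isym$, follows immediately from Proposition~\ref{prop:5+2+1}: since $\sL_{p_6,p_7} \subset \sL_{p_8}$, we have
\[
D = \Isym \cap \sL_{p_6,p_7} \subset \Isym \cap \sL_{p_8}.
\]
The hyperplane $\sL_{p_8}$ is tangent to $\Isym$ at the unique point $b \in \Isym$ with $\gamma(b) = \psi(p_8)$, where $\gamma\colon \Isym \dashrightarrow \Ssym$ is the Gauss map. Hence $\Isym \cap \sL_{p_8} = T_b\Isym \cap \Isym = \sK_b$, which realizes $D$ as a plane section of the Kummer surface $\sK_b$.

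For the Prym identification, I would apply Proposition~\ref{P:kummer-plane-sections}(a) to the plane section $D \subset \sK_b = \sK_{C_b}$, producing an unramified double cover $\pi\colon X \to D$ with $\Prym(X, D) \simeq \Jac(C_b)$. The $2$-torsion class $\ker(\pi^*)$ then lies in the cofactor $E[2]$ of the decomposition $\Jac(D)[2] = E[2] \oplus \Jac(C_b)[2]$ supplied by Corollary~\ref{C:I_plane_sections}, and the three nontrivial elements of $E[2]$ are in bijection with the three intersection points $\psi(p_6), \psi(p_7), \psi(p_8)$ of $L = W^\vee \cap \Ssym$. The choice of tangent point $b = \gamma^{-1}(\psi(p_8))$ distinguishes $\psi(p_8)$ from the other two, and the resulting $2$-torsion class corresponds to the difference $\theta_6 - \theta_7$ of the odd theta characteristics marked by $p_6$ and $p_7$ in the Cayley octad.

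The main remaining task, and the step I expect to be the principal obstacle, is the identification of $C_b$ with the double cover of the rational normal cubic through $p_1,\ldots,p_5,p_8$ ramified at those six points. Under the symmetroid description of $\Isym$, the point $b$ corresponds to a rank-$3$ quadric $Q_b \in \sL$, and the condition $T_b\Isym = \sL_{p_8}$ identifies the vertex of $Q_b$ as $p_8$. To make the identification of $C_b$ precise, I would use Proposition~\ref{P:kummer-polar}(4) to realize $C_b$ explicitly as a double cover of the polar conic of $\sK_b$ at $b$ branched on the degree-$6$ locus cut out by $\bp^{(1)}_{b,G} = \bp^{(2)}_{b,G} = 0$, and then trace this configuration through the projection from $b$ inside the symmetroid model $T_b\Isym = \sL_{p_8}$: the polar conic should be identified with a projective model of the rational normal cubic through $p_1,\ldots,p_5,p_8$, and its six marked points with the six points themselves. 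Alternatively, this final identification may be obtained by invoking the classical moduli interpretation attached to the symmetroid presentation of $\Isym$, under which a smooth point $b \in \Isym$ with symmetroid vertex $v_b$ corresponds to the Kummer of the genus~$2$ curve whose Weierstrass points are placed at $p_1,\ldots,p_5,v_b$ on their rational normal cubic.
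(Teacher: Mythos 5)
Your argument is correct and follows essentially the same route as the paper, which states this corollary without a separate proof, treating it as a direct assembly of Proposition~\ref{prop:5+2+1} (giving $D\subset \sL_{p_8}\cap\Isym$), the Prym interpretation of plane sections of $\sK_b$ from Sections~\ref{S:synthetic}--\ref{S:prymg3}, and the symmetroid description. The one step you flag as the principal obstacle --- identifying $C_b$ with the double cover of the rational normal curve through $p_1,\ldots,p_5,p_8$ branched at those points --- is not actually open, and your second proposed route is the intended one: the subsection on genus $2$ curves and their quartic surfaces already establishes that the discriminant quartic of the web of quadrics through six general points of $\PP^3$ (here the web $\sL_{p_8}$, whose point of tangency $b$ corresponds to the rank-$3$ quadric with vertex $p_8$, exactly as you observe) is the Kummer quartic surface of precisely that double cover, so no further tracing through polar conics is needed.
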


\section{Constructing genus $4$ Jacobians isogenous to a square}
\label{S:M2}

We return to the construction of Section~\ref{S:synthetic}, with an Igusa quartic $\sI$ and points $a,b\in\sI$ with $\P(a,b)=0$.
If we take $C_a\simeq C_b$ then $\Jac(X)$ is (2,2)-isogenous to $\Jac(C_a)^2$, and hence has a copy of $M_2(\ZZ)$ in its endomorphism ring.

There are several ways of enforcing such an isomorphism, and we can track them in the following way. If $C_a\simeq C_b$ then we have two ways of identifying $\Jac(C_a)[2]$ with $\Jac(C_b)[2]$: we have the isomorphism $\Jac(C_b)\to\Jac(C_a)$ restricted to the $2$-torsion and we have the identification $\Jac(C_a)[2]\simeq \Jac(C_b)[2]$ used for the gluing.
The composition yields an automorphism $\sigma\in\Sp_4(\FF_2)\simeq S_6$. The group $S_6$ acts by permutation on the coordinates $\sI\subset\PP^5$ although its permutation action is not conjugate to the action on the odd theta characteristics (see Remark~\ref{R:outer_aut}).
For each of the finitely many choices for $\sigma$ we can consider the corresponding locus $\P(a,\sigma(a))=0$ on~$\sI$. The geometry of this locus depends on the conjugacy class of $\sigma$, i.e., the cycle type as a permutation on six elements. We stick with $\sI=\sI_4$ here.

Note that if $\sigma(a)=a$, then the induced isogeny is just $\Jac(C_a)^2\to\Jac(C_a)^2$
defined by $(\fd_1,\fd_2)\mapsto(\fd_1+\fd_2,\fd_1-\fd_2)$, and hence we do not get a codomain that is the Jacobian of a genus $4$ curve. We also discard the elliptic locus $\sE$: there are other ways of constructing genus $4$ curves $X$ with a Jacobian isogenous to a product of elliptic curves.

In what follows we list representatives of the conjugacy classes as they act on the coordinates of $\sI$, so with $\sigma=(0,1,2)$ we mean the automorphism
\[\sigma\colon (x_0:x_1:x_2:x_3:x_4:x_5)\mapsto (x_1:x_2:x_0:x_3:x_4:x_5).\]
For $\sigma= (0), (0,1), (0,1)(2,3)(4,5)$ we find that the locus $\P(a,\sigma(a))=0$ on $\sI$ is entirely supported on the fixed and elliptic loci. For $\sigma=(0,1)(2,3)$ we find two irreducible surfaces, interchanged by $(4,5)$; see Example~\ref{E:2dim_hyplocus} for a parametrization. For the other conjugacy classes, after removing the elliptic loci, we find a single irreducible surface with nonsingular $\QQ$-rational points; see \cite{electronic_resources} for a transcript of our computations. Note that the existence of a nonsingular $\QQ$-rational point on an irreducible variety implies that the variety is also irreducible over the algebraic closure.

\begin{example}\label{E:M2_nonhyp}
As a fairly representative example, let us take $\sigma=(0,1,2)$ and the two points $a=(-55: -29:  49:  36:  20: -21)$ and $b=\sigma(a)=(-29 : 49 : -55 : 36 : 20 : -21)$. We follow the procedure outlined in Proposition~\ref{P:polar_construction}. We choose the twist of $C_b$ that makes it agree with $C_a$ and after changing coordinates to reduce coefficients and such that the models for $C_a$ and $C_b$ are identical, we find
\[C_b=C_a\colon y^2=2(x-7)x(x+2)(x+1)(x-4)\text{ with the map }\mu\colon x\mapsto \frac{7x^2 - 37x - 84}{x^2 + 11x}.\]
The map $\mu\colon \PP^1\to\PP^1$ is branched over $121x^2 + 478x + 3721=0$ and one can check that it maps $-2\mapsto -1\mapsto -4\mapsto -2$ and $\infty\mapsto 7\mapsto 0\mapsto\infty$, i.e., the map $\mu$ induces a permutation on the branch points of $C_b$ of cycle type $(1,2,3)(4,5,6)$.
We now have
\[\begin{tikzcd}
&&C_b\arrow{d}\\
C_a\arrow{dr}&&\PP^1\arrow{dl}{\mu}\\
&\PP^1
\end{tikzcd}\]
which we can complete with the fibre product $X!=C_b\times_{\PP^1} C_a$ and the double covers $X!\to X\to C_a$ that are induced. We find
\[
    X\colon \begin{cases}
    y^2=2(x-7)x(x+2)(x+1)(x-4)\\
    z^2=-9828y - 5577x^3 - 13914x^2 + 131121x + 125538
    \end{cases}.
\]
It follows that $\Jac(X)$ is isogenous to $\Jac(C_a)^2$. Furthermore, one can check that $\Jac(C_a)$ is absolutely simple and has minimal endomorphism ring, so $\End(\Jac(X))=M_2(\ZZ)$.
\end{example}

\subsection{Hyperelliptic genus $4$ curves with Jacobians isogenous to a square}\label{S:M2_hyp}

If we insist that $X$ is hyperelliptic, we get the condition $\P(a,\sigma(a))=\P(\sigma(a),a)=0$. The expected dimension of this locus is $1$-dimensional, and it indeed contains several components of this dimension. However, as it turns out, for $\sigma=(1,2)(3,4)$, the equation $\P(a,\sigma(a))=0$ implies $P(\sigma(a), a)$ as well, so for that conjugacy class of $\sigma$, the curve $X$ is automatically hyperelliptic.

Recall from Figure~\ref{fig:V4} that a hyperelliptic $X$ is a double cover of both $C_a$ and $C_b$, with $C_a, C_b$ both covering the same $\PP^1$. Let us write $\alpha_1,\ldots,\alpha_6\in\PP^1(\kalg)$ for the support of the branch locus of $C_a$ over $\PP^1$ and similarly $\beta_1,\ldots,\beta_6$ for $C_b$. Let us assume that $\alpha_i=\beta_i$ for $i=1,\ldots, 5$; one has that $\alpha_6 \neq \beta_6$ when $X$ is irreducible.  The isomorphism $\Jac(C_a)[2] \simeq\Jac(C_b)[2]$ whose graph is the kernel of the isogeny $\Jac(C_a)\times\Jac(C_b)\to\Jac(X)$ is induced by the identification $\alpha_i\mapsto \beta_i$.

An isomorphism $C_a\to C_b$ induces an automorphism $\mu$ of $\PP^1$ that sends $\{\alpha_1,\ldots,\alpha_6\}$ to $\{\beta_1,\ldots,\beta_6\}$. It induces a permutation $\sigma\in S_6$ such that $\mu(\alpha_i)=\beta_{\sigma(i)}$.

We see that any cycle of $\sigma$ that does not include $6$ corresponds to a full orbit of $\mu$ of the same length, whereas the cycle that does contain $6$ corresponds to a partial orbit of $\mu$. The data that over an algebraically closed field $k$ specifies hyperelliptic curves $C,X$ of genera $2,4$ respectively with a $2$-isogeny $\Jac(C)^2\to\Jac(X)$ thus consists of a degree $6$ locus $B$ on $\PP^1$ together with an automorphism $\mu$ such that $B\cap\mu(B)$ has degree $5$.

The discrete data involved in this diagram involves the cycle type of $\sigma$, where we mark the cycle containing $6$. To indicate this we decorate it with a star: $6^*$. The singleton $(6^*)$ denotes the identity permutation, with the cycle containing $6$ marked. For our purposes the cycle types of  $(1,2,3,4)(5,6^*)$ and $(1,2)(3,4,5,6^*)$ are distinct.

The finite orbits of automorphisms of $\PP^1$ are rather restricted, which constrains the cycle types for $\sigma$ that can be realized. In particular:
\begin{itemize}
	\item If $\mu$ is not the identity then $\mu$ has one or two fixed points.
	\item If $\mu$ has a finite orbit of length larger than $1$, then $\mu$ has two fixed points and is of finite order and, outside of the fixed points, all orbits are of the same length.
\end{itemize}

\noindent
This rules out various cycle types.
For $\sigma=(6^*),(4,5)(6^*),(5,6^*),(4,5,6^*)$ we would need at least three fixed points and hence $\mu=\mathrm{id}$, which is inadmissible since then $\alpha_6$ is fixed too.
For $\sigma=(1,2)(4,5,6^*),(1,2)(3,4,5,6^*)$ we get that $\mu$ is of order $2$, so we cannot find an orbit of $\mu$ large enough to accommodate $\alpha_6$.
For $\sigma=(1,2)(5,6^*),(1,2)(3,4)(5,6^*),(1,2,3)(4,5,6^*)$ we get that the order of $\mu$ would be equal to the cycle length of $6^*$, which would make the cycle of $6^*$ a full orbit under $\mu$.

For the remaining cycle types we can choose coordinates, mostly by choosing $0,\infty$ as fixed points and setting one cycle to the orbit of $1$, but in some cases we get a nicer model by making different choices.
\begin{table}[t]
\[
\def\arraystretch{1.5}
\begin{array}{c|c|c}
	\text{type of }\sigma&f(x)&\mu\\
	\hline
	(1,2,3)(6^*)&x(x-1)(x^2-x+1)(x-u)&\mu(x)=\frac{1}{1-x}\\
	(1,2,3)(5,6^*)&(x^3-1)(x^2+ux+u^2)&\mu(x)=\zeta_3x\\
	(1,2)(3,4)(6^*)&(x^4+ux^2+v)(x-1)&\mu(x)=-x\\
	(1,2,3,4)(6^*)&(x^4-1)(x-u)&\mu(x)=\zeta_4x\\
	(3,4,5,6^*)&x(x-1)(x-u)(x-u^2)(x-u^3)&\mu(x)=ux\\
	(1,2,3,4)(5,6^*)&x(x-1)(x+1)(x-u)(x-\frac{u-1}{u+1})&\mu(x)=\frac{x-1}{x+1}\\
	(1,2,3,4,5)(6^*)&(x^5-1)(x-u)&\mu(x)=\zeta_5x\\
	(2,3,4,5,6^*)&(x-1)(x-u)(x-u^2)(x-u^3)(x-u^4)&\mu(x)=ux\\
	(1,2,3,4,5,6^*)&(x-1)(x-u)(x-u^2)(x-u^3)(x-u^4)(x-u^5)&\mu(x)=ux\\
\end{array}\]
\caption{Curves $C_a\colon y^2=f(x)$ and $C_b\colon y^2=f(\mu(x))$ leading to hyperelliptic $X$ with $M_2(\ZZ)\subset\End\Jac(X)$}\label{T:hyperellipticM2}
\end{table}
\begin{remark}
Note that for $\sigma=(2,3,4,5,6^*)$ we also have $f(x)=x(x-1)(x-2)(x-3)(x-4)$ and $\mu(x)=x+1$. With a change of model this is obtained from the limit $u\to 1$ for the listed family. A similar degeneration occurs for $\sigma=(1,2,3,4,5,6^*)$.
\end{remark}

Of particular interest is that $\sigma=(1,2)(3,4)(6^*)$ admits a $2$-parameter family. Indeed, $\P(a,\sigma(a))$ and $\P(\sigma(a),a)$ cut out the same locus. 

Peculiarly, for several of the other $\sigma$ there is an involution $\mu'$ of $\PP^1$ that yields $\sigma'$ of cycle type $(1,2)(3,4)(6^*)$. We tabulate them in Table~\ref{T:extra_mus}
\begin{table}[t]
\[
\def\arraystretch{1.5}
\begin{array}{c|c}
\text{type of }\sigma&\mu'(x)\\
\hline
(1,2,3)(6^*)& \frac{1}{x}\\
(1,2,3,4)(6^*)& \frac{1}{x}\\
(3,4,5,6^*)& \frac{u^2}{x}\\
(1,2,3,4,5)(6^*)& \frac{1}{x}\\
(2,3,4,5,6^*)& \frac{u^4}{x}\\
(1,2,3,4,5,6^*)& \frac{u^4}{x}\\
\end{array}\]
\caption{Cycle types with an additional automorphism $\mu'$ of type $(1,2)(3,4)(6^*)$}\label{T:extra_mus}
\end{table}
\begin{example}\label{E:2dim_hyplocus}
For $\sigma=(1,2)(3,4)(6^*)$ we have
\[C_{u,v}\colon y^2=(x^4+ux^2+v)(x+1).\]
The fibre product with the double cover $y_2^2=x^2-1$ yields a hyperelliptic curve of genus $4$ with a model
\[X\colon y^2=(x^2+1)(vx^8 + 4(u + v)x^6 + (8u + 6v + 16)x^4 + 4(u + v)x^2 + v).\]
We have that $\Jac(X)$ is $(2,2)$-isogenous to $\Jac(C_{u,v})^2$.
\end{example}

\bigskip

\renewcommand{\MR}[1]{}

\begin{bibdiv}
	\begin{biblist}
\bib{Bolza1887}{article}{
	author={Bolza, Oskar},
	title={Ueber die {R}eduction hyperelliptischer {I}ntegrale erster {O}rdnung und erster {G}attung auf elliptische durch eine {T}ransformation vierten {G}rades},
	journal={Math. Ann.},
	volume={8},
	year={1887},
	number={3},
	pages={447--456},
}

\bib{bruin:prym}{article}{
	author={Bruin, Nils},
	title={The arithmetic of Prym varieties in genus 3},
	journal={Compos. Math.},
	volume={144},
	date={2008},
	number={2},
	pages={317--338},
	issn={0010-437X},
}

\bib{BruinDoerksen2011}{article}{
	author={Bruin, Nils},
	author={Doerksen, Kevin},
	title={The arithmetic of genus two curves with $(4,4)$-split Jacobians},
	journal={Canad. J. Math.},
	volume={63},
	date={2011},
	number={5},
	pages={992--1024},
	issn={0008-414X},
	review={\MR {2866068}},
}

\bib{electronic_resources}{misc}{
	author={Bruin, Nils},
	author={Kulkarni, Avinash},
	title={Electronic resources},
	date={2023},
	note={\url {https://arxiv.org/src/[arXivID]/anc}},
}

\bib{CasselsFlynn1996}{book}{
	author={Cassels, J. W. S.},
	author={Flynn, E. V.},
	title={Prolegomena to a middlebrow arithmetic of curves of genus $2$},
	series={London Mathematical Society Lecture Note Series},
	volume={230},
	publisher={Cambridge University Press, Cambridge},
	date={1996},
	pages={xiv+219},
	isbn={0-521-48370-0},
	review={\MR {1406090}},
	doi={10.1017/CBO9780511526084},
}

\bib{Catanese2021kummer}{misc}{
	author={Catanese, Fabrizio},
	title={Kummer quartic surfaces, strict self-duality, and more},
	year={2021},
	note={arXiv preprint arXiv:2101.10501},
}

\bib{Dolgachev2012classical}{book}{
	author={Dolgachev, Igor V.},
	title={Classical algebraic geometry},
	note={A modern view},
	publisher={Cambridge University Press, Cambridge},
	date={2012},
	pages={xii+639},
	isbn={978-1-107-01765-8},
	review={\MR {2964027}},
	doi={10.1017/CBO9781139084437},
}

\bib{Fay1973theta}{book}{
	author={Fay, John D.},
	title={Theta functions on Riemann surfaces},
	series={Lecture Notes in Mathematics, Vol. 352},
	publisher={Springer-Verlag, Berlin-New York},
	date={1973},
	pages={iv+137},
	review={\MR {0335789}},
}

\bib{Flynn1993}{article}{
	author={Flynn, E. V.},
	title={The group law on the Jacobian of a curve of genus $2$},
	journal={J. Reine Angew. Math.},
	volume={439},
	date={1993},
	pages={45--69},
	issn={0075-4102},
	review={\MR {1219694}},
}

\bib{GAP4}{manual}{
	label={GAP},
	organization={The GAP~Group},
	title={GAP -- Groups, Algorithms, and Programming, Version 4.12.2},
	year={2022},
	note={\url {https://www.gap-system.org}},
}

\bib{vanderGeer1982Siegel}{article}{
	author={van der Geer, G.},
	title={On the geometry of a Siegel modular threefold},
	journal={Math. Ann.},
	volume={260},
	date={1982},
	number={3},
	pages={317--350},
	issn={0025-5831},
	review={\MR {669299}},
}

\bib{Goursat1885}{article}{
	author={Goursat, E.},
	title={Sur la r\'{e}duction des int\'{e}grales hyperelliptiques},
	language={French},
	journal={Bull. Soc. Math. France},
	volume={13},
	date={1885},
	pages={143--162},
	issn={0037-9484},
	review={\MR {1503964}},
}

\bib{Kuhn1988}{article}{
	author={Kuhn, Robert M.},
	title={Curves of genus $2$ with split Jacobian},
	journal={Trans. Amer. Math. Soc.},
	volume={307},
	date={1988},
	number={1},
	pages={41--49},
	issn={0002-9947},
	review={\MR {0936803}},
}

\bib{legendre-traite}{book}{
	title={Trait{\'e} des fonctions elliptiques et des int{\'e}grales eul{\'e}riennes},
	author={Legendre, Adrien-Marie},
	volume={1-3},
	year={1828},
	publisher={Imprimerie de Huzard-Courcier},
}

\bib{magma}{article}{
	label={Magma},
	title={The Magma algebra system. I. The user language},
	author={Bosma, The MAGMA computer algebra system is described in Wieb},
	author={Cannon, John},
	author={Playoust, Catherine},
	journal={J. Symbolic Comput.},
	volume={24},
	number={3--4},
	pages={235--265},
	date={1997},
}

\bib{MagaardShaskaVolklein2009}{article}{
	author={Magaard, Kay},
	author={Shaska, Tanush},
	author={V\"{o}lklein, Helmut},
	title={Genus 2 curves that admit a degree 5 map to an elliptic curve},
	journal={Forum Math.},
	volume={21},
	date={2009},
	number={3},
	pages={547--566},
	issn={0933-7741},
	review={\MR {2526800}},
}

\bib{MilneAbVars1986}{article}{
	author={Milne, J. S.},
	title={Abelian varieties},
	conference={ title={Arithmetic geometry}, address={Storrs, Conn.}, date={1984}, },
	book={ publisher={Springer, New York}, },
	isbn={0-387-96311-1},
	date={1986},
	pages={103--150},
	review={\MR {0861974}},
}

\bib{MilneJacobians1986}{article}{
	author={Milne, J. S.},
	title={Jacobian varieties},
	conference={ title={Arithmetic geometry}, address={Storrs, Conn.}, date={1984}, },
	book={ publisher={Springer, New York}, },
	isbn={0-387-96311-1},
	date={1986},
	pages={167--212},
	review={\MR {0861976}},
}

\bib{Mumford1974prym}{article}{
	author={Mumford, David},
	title={Prym varieties. I},
	conference={ title={Contributions to analysis (a collection of papers dedicated to Lipman Bers)}, },
	book={ publisher={Academic Press, New York}, },
	date={1974},
	pages={325--350},
	review={\MR {0379510}},
}

\bib{Verra1987prym}{article}{
	author={Verra, Alessandro},
	title={The fibre of the Prym map in genus three},
	journal={Math. Ann.},
	volume={276},
	date={1987},
	number={3},
	pages={433--448},
	issn={0025-5831},
	review={\MR {875339}},
}

 	\end{biblist}
\end{bibdiv}

\end{document}